\newcommand{\cA}{\mathcal{A}}
\newcommand{\cB}{\mathcal{B}}
\newcommand{\cK}{\mathcal{K}}
\newcommand{\cM}{\mathcal{M}}
\newcommand{\fM}{\mathfrak{M}}
\newcommand{\eps}{\varepsilon}
\newcommand{\dR}{{\bf\dot{\R}}}
\newcommand{\C}{\mathbb{C}}
\newcommand{\N}{\mathbb{N}}
\newcommand{\R}{\mathbb{R}}
\newtheorem{theorem}{Theorem}[section]
\newtheorem{lemma}[theorem]{Lemma}
\newtheorem{corollary}[theorem]{Corollary}
\newtheorem{question}[theorem]{Question}
\theoremstyle{definition}
\theoremstyle{remark}
\newtheorem{remark}[theorem]{Remark}
\numberwithin{equation}{section}
\begin{document}

%
%
%
%
%
%
%
%
%

\title[Algebras of convolution type operators]
{Algebras of convolution type\\ 
operators with continuous data\\ 
do not always contain\\ 
all rank one operators}
\author[A. Karlovich]{Alexei Karlovich}
\address{
Centro de Matem\'atica e Aplica\c{c}\~oes\\
Departamento de Matem\'atica\\
Faculdade de Ci\^encias e Tecnologia\\
Universidade Nova de Lisboa\\
Quinta da Torre\\
2829--516 Caparica\\
Portugal} \email{oyk@fct.unl.pt}
\author[E. Shargorodsky]{Eugene Shargorodsky}
\address{%
Department of Mathematics\\
King's College London\\
Strand, London WC2R 2LS\\
United Kingdom
\\ and \\
Technische Universit\"at Dresden\\
Fakult\"at Mathematik\\
01062 Dresden\\
Germany}
\email{eugene.shargorodsky@kcl.ac.uk}

\subjclass{Primary 47G10; 46E30 Secondary 42A45}

\keywords{%
Continuous Fourier multiplier, 
algebra of convolution type operators, 
Hardy-Littlewood maximal operator,
rank one operator,
separable Banach function space,
Lorentz space.}

\date{January 1, 2004}
\dedicatory{{
Dedicated to Bernd Silbermann on the occasion of his 80th birthday
}}
\begin{abstract}
Let $X(\mathbb{R})$ be a separable Banach function space such that the 
Hardy-Littlewood maximal operator is bounded {on} $X(\mathbb{R})$ 
and on its associate space $X'(\mathbb{R})$. The algebra $C_X(\dR)$
of continuous Fourier multipliers on $X(\mathbb{R})$ is defined as the 
closure of the set of continuous functions of bounded variation on 
$\dR=\mathbb{R}\cup\{\infty\}$ with respect to the multiplier norm. It 
was proved by C. Fernandes, Yu. Karlovich and the first author
\cite{FKK19} that if the space $X(\R)$ is reflexive, then the ideal of compact 
operators is contained in the Banach algebra $\mathcal{A}_{X(\mathbb{R})}$ 
generated by all multiplication operators $aI$ by continuous functions 
$a\in C(\dR)$ and by all Fourier convolution operators $W^0(b)$ with symbols 
$b\in C_X(\dR)$. We show that there are separable and non-reflexive 
Banach function spaces $X(\mathbb{R})$ such that the algebra 
$\mathcal{A}_{X(\mathbb{R})}$ does not contain all rank one operators. 
In particular, this happens in the case of the Lorentz spaces 
$L^{p,1}(\mathbb{R})$ with $1<p<\infty$.
\end{abstract}

\maketitle
\section{Introduction}
We denote by $\mathcal{S}(\R)$ the Schwartz class of all infinitely 
differentiable and rapidly decaying functions (see, e.g., 
\cite[Section~2.2.1]{G14}). Let $F$ denote the Fourier transform, defined
on $\mathcal{S}(\R)$ by
\[
(Ff)(x):=\widehat{f}(x):=\int_\R f(t)e^{itx}\,dt,
\quad
x\in\R,
\]
and let $F^{-1}$ be the inverse of $F$ defined on $\mathcal{S}(\R)$ 
by
\[
(F^{-1}g)(t)=\frac{1}{2\pi}\int_\R g(x)e^{-itx}\,d x,
\quad
t\in\R.
\]
It is well known that these operators extend uniquely to the space
$L^2(\R)$. As usual, we will use 
{the} symbols $F$ and $F^{-1}$ for the
direct and inverse Fourier transform on $L^2(\R)$. 
The Fourier convolution operator 
\[
W^0(a):=F^{-1}aF
\]
is bounded on the space $L^2(\R)$ for every $a\in L^\infty(\R)$.

In this 
{paper, we study} algebras of operators generated by 
operators of multiplication $aI$ and Fourier convolution operators
$W^0(b)$ on so-called Banach function spaces in the case when both $a$ and $b$
are continuous. We postpone a formal definition of a Banach function 
space $X(\R)$ and its associate space $X'(\R)$ until Section~\ref{sec:BFS}. 
The Lebesgue spaces $L^p(\R)$ with 
$1\le p\le\infty$ constitute the most important example of Banach function
spaces. The class of Banach function spaces includes classical Orlicz
spaces $L^\Phi(\R)$, Lorentz spaces $L^{p,q}(\R)$, all other 
rearrangement-invariant spaces, as well 
{as} (non-rearrangement-in\-vari\-ant) 
weighted Lebesgue spaces $L^p(\R,w)$ and 
variable Lebesgue spaces $L^{p(\cdot)}(\R)$.

Let $X(\R)$ be a separable Banach function space. Then $L^2(\R)\cap X(\R)$
is dense in $X(\R)$ (see Lemma~\ref{le:density} below). A function 
$a\in L^\infty(\R)$ is called a Fourier multiplier on $X(\R)$ if the 
convolution operator $W^0(a):=F^{-1}aF$ maps $L^2(\R)\cap X(\R)$ into 
$X(\R)$ and extends to a bounded linear operator on $X(\R)$. The function 
$a$ is called the symbol of the Fourier convolution operator $W^0(a)$. 
The set $\cM_{X(\R)}$ of all Fourier multipliers on  $X(\R)$ is a unital 
normed algebra under pointwise operations and the norm
\[
\left\|a\right\|_{\cM_{X(\R)}}:=\left\|W^0(a)\right\|_{\cB(X(\R))},
\]
where $\cB(X(\R))$ denotes the Banach algebra of all bounded linear operators
on the space $X(\R)$. Let $\cK(X(\R))$ denote the ideal of all compact
operators in the Banach algebra $\cB(X(\R))$.

Recall that the (non-centered) Hardy-Littlewood maximal function $Mf$ of a
function $f\in L_{\rm loc}^1(\R)$ is defined by
\[
(Mf)(x):=\sup_{Q\ni x}\frac{1}{|Q|}\int_Q|f(y)|\,dy,
\]
where the supremum is taken over all intervals $Q\subset\R$ of finite length
containing $x$. The Hardy-Littlewood maximal operator $M$ defined by the rule 
$f\mapsto Mf$ is a sublinear operator.

Suppose that $a:\R\to\C$ is a function of bounded variation $V(a)$ given 
by
\[
V(a):=\sup \sum_{k=1}^n |a(x_k)-a(x_{k-1})|,
\]
where the supremum is taken over all partitions of $\R$ of the form
\[
-\infty<x_0<x_1<\dots<x_n<+\infty
\]
with $n\in\N$. The set $V(\R)$ of all functions of bounded variation
on $\R$ with the norm
\[
\|a\|_{V(\R)}:=\|a\|_{L^\infty(\R)}+V(a)
\]
is a unital non-separable Banach algebra.

Let $X(\R)$ be a separable Banach function space such that the
Hardy-Littlewood maximal operator $M$ is bounded on $X(\R)$ and on its
associate space $X'(\R)$. It follows from \cite[Theorem~4.3]{K15a}
that if a function $a:\R\to\C$ has a bounded variation $V(a)$, then 
the convolution operator $W^0(a)$ is bounded on the space $X(\R)$ and
\begin{equation}\label{eq:Stechkin}
\|W^0(a)\|_{\cB(X(\R))}
\le
c_{X}\|a\|_{{V(\R)},}
\end{equation}
where $c_{X}$ is a positive constant depending only on $X(\R)$.

For Lebesgue spaces $L^p(\R)$, $1<p<\infty$, inequality~\eqref{eq:Stechkin} is
usually called Stechkin's inequality.
We refer to \cite[Theorem~2.11]{D79} for the proof of \eqref{eq:Stechkin}
in the case of Lebesgue spaces $L^p(\R)$ with $c_{L^p}=\|S\|_{\cB(L^p(\R))}$,
where $S$ is the Cauchy singular integral operator.

Let $C(\dR)$ denote the $C^*$-algebra of continuous functions on the one-point 
compactification $\dR=\R\cup\{\infty\}$ of the real line. For a subset 
$\mathfrak{S}$ of a Banach space $\mathcal{E}$, we denote by
$\operatorname{clos}_\mathcal{E}(\mathfrak{S})$ the closure of $\mathfrak{S}$
with respect to the norm of $\mathcal{E}$. Consider the following algebra of 
continuous Fourier multipliers:
\begin{equation}\label{eq:algebra-CX}
C_{X}(\dR):=\operatorname{clos}_{\cM_{X(\R)}}
\big(C(\dR)\cap V(\R)\big).
\end{equation}
It follows Theorem \ref{th:continuous-embedding} below that 
$C_X(\dR)\subset C(\dR)$. 
The aim of this paper is to continue the study of the smallest Banach subalgebra
\[
\cA_{X(\R)}
:=
\operatorname{alg}\{aI,W^0(b)\ :\ a\in C(\dR),\ b\in C_X(\dR)\}
\]
of the algebra $\cB(X(\R))$ that contains all operators of multiplication $aI$ 
by functions $a\in C(\dR)$ and all Fourier convolution operators $W^0(b)$ with 
symbols $b\in C_X(\dR)$ started in the setting of reflexive Banach function
spaces in \cite{FKK19}. The main result of that paper says the following.
\begin{theorem}[{\cite[Theorem~1.1]{FKK19}}]
\label{th:FKK}
Let $X(\R)$ be a reflexive Banach function space such that the Hardy-Littlewood 
maximal operator $M$ is bounded on $X(\R)$ and on its associate space 
$X'(\R)$. Then the ideal of compact operators $\cK(X(\R))$ is contained in the 
Banach algebra $\cA_{X(\R)}$.
\end{theorem}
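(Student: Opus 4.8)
The plan is to produce a single nonzero rank one operator inside $\cA_{X(\R)}$ and then to spread it out over all rank one operators. Note first that $X(\R)$ has the approximation property: the truncations $T_n:=\chi_{[-2^n,2^n]}E_n$ of the dyadic conditional expectations $E_n$ are finite rank operators satisfying $|T_nf|\le Mf$ pointwise, hence $\|T_n\|_{\cB(X(\R))}\le\|M\|_{\cB(X(\R))}$, and $T_nf\to f$ in $X(\R)$ for every $f$; consequently the finite rank operators are dense in $\cK(X(\R))$, and it suffices to show that $\cA_{X(\R)}$ contains every rank one operator.

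\emph{A rank one operator in the algebra.} Put $c_\pm(x):=(x\pm i)^{-1}$; these lie in $C(\dR)\cap V(\R)$, so $W^0(c_\pm)\in\cB(X(\R))$ by \eqref{eq:Stechkin} and $c_\pm\in C_X(\dR)$. Computing $F^{-1}c_\pm$ explicitly (one obtains one-sided exponentials) shows that, for $\psi\in C_c^\infty(\R)\subset C(\dR)$, the operator $W^0(c_-)\,\psi I\,W^0(c_+)\in\cA_{X(\R)}$ is the integral operator with kernel
\[
K_\psi(x,t)=e^{x+t}\int_{\max(x,t)}^{\infty}e^{-2s}\psi(s)\,ds .
\]
Choose $\psi=\psi_n$ with $\psi_n(s):=e^{2s}\rho_n(s)$, where $\rho_n\ge 0$, $\int_\R\rho_n=1$ and $\operatorname{supp}\rho_n\subset(-1,-1+\tfrac1n)$. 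Then $K_{\psi_n}(x,t)\to e^{x+t}\chi_{(-\infty,-1)}(x)\chi_{(-\infty,-1)}(t)$, the kernel of the rank one operator $R_*:=u_*\otimes v_*$ with $u_*=v_*=e^{(\cdot)}\chi_{(-\infty,-1)}$ and $(u\otimes v)f:=\big(\int_\R fv\big)u$. I would then prove that $W^0(c_-)\,\psi_n I\,W^0(c_+)\to R_*$ in $\cB(X(\R))$: the kernel of the difference is supported in the thin cross $\{\max(x,t)\in(-1,-1+\tfrac1n)\}$ and is dominated there by $e^{x+t}$, so splitting that cross into its two strips and applying the H\"older inequality for Banach function spaces bounds the norm of each part by a product one of whose factors, $\|e^{(\cdot)}\chi_{(-1,-1+1/n)}\|_{X(\R)}$ or $\|e^{(\cdot)}\chi_{(-1,-1+1/n)}\|_{X'(\R)}$, tends to $0$ by absolute continuity of the norms (both $X(\R)$ and $X'(\R)$ are separable, as $X(\R)$ is reflexive). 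Since $\cA_{X(\R)}$ is closed, $R_*\in\cA_{X(\R)}$.

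\emph{Spreading out.} Using reflexivity we identify $X(\R)^*=X'(\R)$ via the pairing $\langle f,g\rangle=\int_\R fg$; then $u_*\in L^2(\R)\cap X(\R)$ and $v_*\in L^2(\R)\cap X(\R)^*$. For $A,B\in\cA_{X(\R)}$ one has $AR_*B=(Au_*)\otimes(B^*v_*)$, so it is enough to show that $\{Au_*:A\in\cA_{X(\R)}\}$ is dense in $X(\R)$ and $\{B^*v_*:B\in\cA_{X(\R)}\}$ is dense in $X(\R)^*$; then $\{AR_*B\}$ is dense in the set of rank one operators, so the closed algebra $\cA_{X(\R)}$ contains every rank one operator, hence all finite rank operators, hence $\cK(X(\R))$. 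For the first density take the Gaussian symbol $g(x):=e^{-x^2}\in C(\dR)\cap V(\R)$, so that $W^0(g)\in\cB(X(\R))$ and $W^0(g)u_*=\gamma*u_*$ with $\gamma=F^{-1}g>0$; since $u_*\ge 0$, $u_*\not\equiv 0$, the function $w_*:=\gamma*u_*$ is continuous, strictly positive on $\R$, and exponentially decaying. Hence for $f\in X(\R)$ the functions $(f\chi_{[-N,N]})/w_*$, smoothly cut off, belong to $C(\dR)$ and their products with $w_*$ converge to $f$ in $X(\R)$ by absolute continuity of the norm; as $aI\,W^0(g)\in\cA_{X(\R)}$ for $a\in C(\dR)$, this gives density of $\{Au_*\}$. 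The density of $\{B^*v_*\}$ in $X(\R)^*=X'(\R)$ is obtained in the same way from $(W^0(g)aI)^*v_*=a\,(\gamma*v_*)$, the boundedness of $W^0(g)$ on $X'(\R)$ (again \eqref{eq:Stechkin}, since $M$ is bounded on $X'(\R)$), and absolute continuity of the norm of $X'(\R)$ --- and it is precisely here that reflexivity of $X(\R)$ is essential.

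\emph{The main obstacle.} Once $R_*\in\cA_{X(\R)}$, the rest is a routine density argument. The technical heart is the norm convergence in the first step: it amounts to estimating operator norms on $X(\R)$ of integral operators whose kernels are $e^{x+t}$ restricted to very thin but unbounded planar sets, and hence forces one to combine absolute continuity of the norm with the (at most polynomial) growth in $N$ of $\|\chi_{(-N,N)}\|_{X(\R)}$ and $\|\chi_{(-N,N)}\|_{X'(\R)}$, which is guaranteed by the boundedness of $M$ on $X(\R)$ and on $X'(\R)$. Equivalently, one may observe that $W^0(c_-)\,\psi_n I\,W^0(c_+)-R_*$ is a sum of two operators that are themselves rank one, which localizes the whole difficulty to bounding $\|e^{(\cdot)}\chi_E\|_{X(\R)}$ for short intervals $E$ and $\|e^{(\cdot)}\chi_{(-\infty,c)}\|_{X'(\R)}$.
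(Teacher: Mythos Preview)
Your argument is essentially correct, but the route you take is considerably more elaborate than the one underlying the paper's treatment (the paper cites the result from \cite{FKK19} and, in proving the refinement Theorem~\ref{th:FKK-refined}, records the key step as Lemma~\ref{le:one-dimensional-operator}). The paper's approach avoids your whole limiting construction: for $a,b\in C_c(\R)$ one simply picks a function $k\in C_c^\infty(\R)$ with $k\equiv 1$ on an interval containing $\{x-y:x\in\operatorname{supp}a,\ y\in\operatorname{supp}b\}$ and observes the identity
\[
(a\otimes b)f(x)=a(x)\int_\R b(y)f(y)\,dy=a(x)\int_\R k(x-y)b(y)f(y)\,dy=\big(aW^0(\widehat{k})bI\big)f(x),
\]
so that every rank one operator $a\otimes b$ with $a,b\in C_c(\R)$ already lies in $\cA_{X(\R)}$ as a finite product. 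The density step is then the same as yours: $C_c(\R)$ is dense in $X(\R)$ (Lemma~\ref{le:density}) and, by reflexivity, in $X'(\R)=X(\R)^*$, so these rank one operators are norm-dense among all rank one operators, and the approximation property finishes the proof.

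Compared to this, your argument manufactures a \emph{single} rank one operator $R_*$ as a norm limit of products $W^0(c_-)\psi_nI\,W^0(c_+)$ and then spreads it out; that works, but the price is the ``thin cross'' estimate and the positivity/decay analysis of $\gamma*u_*$, neither of which is needed once one has the one-line identity above. Your approach does have the minor conceptual advantage that it makes transparent why a single rank one operator in a closed algebra with rich multiplier and convolution structure already forces all of $\cK(X(\R))$ in; the paper's approach, by contrast, never passes through a limit inside $\cA_{X(\R)}$ at the ``getting one rank one operator'' stage.
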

Note that results of this kind are well known in the setting of (weighted)
Lebesgue spaces (see, e.g., \cite[Lemma~6.1]{KILH13a}, \cite[Theorem~5.2.1 and 
Proposition~5.8.1]{RSS11} and also \cite[Lemma~8.23]{BK97}, 
\cite[Theorem~4.1.5]{RSS11}). They constitute the first step in the Fredholm 
study of more general algebras of convolution type operators with more 
general function algebras in place of $C(\dR)$ and $C_X(\dR)$, respectively 
(see, e.g., \cite{KILH12,KILH13a,KILH13b}), by means of local principles 
(see, e.g., \cite[Sections~1.30--1.35]{BS06}).

Let $\mathfrak{A}$ be a Banach algebra with unit $e$.  The center 
$\operatorname{Cen}\mathfrak{A}$ of $\mathfrak{A}$ is the set of all elements 
$z\in\mathfrak{A}$ with the property that $za = az$ for all $a\in\mathfrak{A}$.
One can successfully apply the Allan-Douglas local principle 
\cite[Section~1.35]{BS06} to the algebra $\mathfrak{A}$ if it possesses 
a (hopefully large) closed subalgebra $\mathfrak{C}$ lying in its center.
Having applications of the Allan-Douglas local principle in mind,
the authors of \cite{FKK19} asked whether the quotient algebra
\[
\cA_{X(\R)}^\pi:= \cA_{X(\R)}/\cK(X(\R))
\]
is commutative under the assumptions of Theorem~\ref{th:FKK}. Our first
result is the positive answer to \cite[Question~1.2]{FKK19}.
\begin{theorem}
\label{th:algebra-Api}
Let $X(\R)$ be a reflexive Banach function space such that the Hardy-Littlewood 
maximal operator $M$ is bounded on $X(\R)$ and on its associate space 
$X'(\R)$. Then the quotient algebra $\cA_{X(\R)}^\pi$ is commutative.
\end{theorem}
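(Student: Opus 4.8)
The plan is to show that any two generators of $\cA_{X(\R)}$ commute modulo compact operators, and then to deduce commutativity of the whole quotient algebra $\cA_{X(\R)}^\pi$ by density and continuity of the canonical projection. The generators are of two types: multiplication operators $aI$ with $a\in C(\dR)$ and convolution operators $W^0(b)$ with $b\in C_X(\dR)$. Two multiplication operators commute exactly (pointwise multiplication is commutative), and two convolution operators $W^0(b_1), W^0(b_2)$ commute exactly as well, since on $L^2(\R)\cap X(\R)$ one has $W^0(b_1)W^0(b_2)=F^{-1}b_1b_2F=W^0(b_2)W^0(b_1)$ and this extends by density to all of $X(\R)$ using Lemma~\ref{le:density}. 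So the only nontrivial commutator to control is $[aI,W^0(b)]=aW^0(b)-W^0(b)aI$, and the crux of the theorem is:

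\emph{Claim:} for $a\in C(\dR)$ and $b\in C_X(\dR)$, the commutator $aW^0(b)-W^0(b)aI$ is compact on $X(\R)$.

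I would prove the Claim first for a dense set of ``nice'' symbols and then pass to the limit. If $a\in C(\dR)$ and $b\in C(\dR)\cap V(\R)$, I expect $[aI,W^0(b)]$ to be compact: this is a classical commutator estimate in the Lebesgue-space theory (see the references cited in the excerpt after Theorem~\ref{th:FKK}, e.g.\ \cite{BK97,RSS11}), and the route to it in the Banach-function-space setting should be to approximate both $a$ and $b$ by functions whose derivatives are Schwartz-class-like, reduce the commutator to an integral operator with a smooth rapidly decaying kernel (hence compact), and use the boundedness \eqref{eq:Stechkin} together with the boundedness of $M$ on $X(\R)$ and $X'(\R)$ to control the approximation. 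Concretely, writing $a-a(\infty)$ and $b-b(\infty)$ one may assume $a,b$ vanish at infinity; then standard manipulations express $[aI,W^0(b)]$ as $W^0(b)[a,\,\cdot\,] $-type terms that are Hankel-like and have kernels decaying because of the smoothness/decay of $\widehat{a'}$ and $b'$. Once compactness is known on the dense subset $C(\dR)\times\big(C(\dR)\cap V(\R)\big)$, I pass to general $b\in C_X(\dR)$: by definition \eqref{eq:algebra-CX} there are $b_n\in C(\dR)\cap V(\R)$ with $\|b_n-b\|_{\cM_{X(\R)}}\to 0$, hence $W^0(b_n)\to W^0(b)$ in $\cB(X(\R))$, so $[aI,W^0(b_n)]\to[aI,W^0(b)]$ in operator norm; since $\cK(X(\R))$ is closed in $\cB(X(\R))$ and each $[aI,W^0(b_n)]$ is compact, $[aI,W^0(b)]$ is compact too.

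With the Claim in hand, commutativity of $\cA_{X(\R)}^\pi$ follows formally. Let $\pi:\cB(X(\R))\to\cB(X(\R))/\cK(X(\R))$ be the quotient map. The set of generators $\{aI : a\in C(\dR)\}\cup\{W^0(b):b\in C_X(\dR)\}$ maps under $\pi$ into a set of pairwise commuting elements (multiplications commute, convolutions commute, and mixed pairs commute modulo $\cK(X(\R))$ by the Claim). Hence the (non-closed) subalgebra generated by these images is commutative; its closure $\cA_{X(\R)}^\pi=\pi(\cA_{X(\R)})$ is then commutative because multiplication in a Banach algebra is jointly continuous and the commutator map $(u,v)\mapsto uv-vu$ is continuous and vanishes on a dense subset. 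The main obstacle is the Claim, specifically establishing the compactness of $[aI,W^0(b)]$ for $b\in C(\dR)\cap V(\R)$ in the generality of an arbitrary separable Banach function space on which $M$ and $M$ on $X'(\R)$ are bounded — one cannot rely on $L^p$-specific tools, so the argument must go through the kernel representation of the commutator for smooth compactly supported data, an approximation argument in the multiplier norm controlled by \eqref{eq:Stechkin}, and a compactness criterion (e.g.\ approximation by finite-rank operators, or an Arzel\`a–Ascoli / Fréchet–Kolmogorov type argument adapted to $X(\R)$) valid in this setting; interpolating between the boundedness properties of $M$ on $X(\R)$ and $X'(\R)$ should supply the needed equicontinuity/decay estimates.
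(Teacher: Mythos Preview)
Your overall strategy is correct and matches the paper's: reduce to showing that $[aI,W^0(b)]\in\cK(X(\R))$ for the generators, then conclude commutativity of $\cA_{X(\R)}^\pi$ by density and continuity of the quotient map (together with Theorem~\ref{th:FKK}, which ensures $\cK(X(\R))\subset\cA_{X(\R)}$ so that the quotient is an algebra).

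Where you diverge from the paper is in the execution of the Claim. The paper does not treat the commutator as a ``Hankel-like'' object and does not use any interpolation or a Fr\'echet--Kolmogorov criterion intrinsic to $X(\R)$; in fact it stresses that interpolation tools are unavailable for general Banach function spaces. Instead, after subtracting the values at infinity it proves the stronger statement that $aW^0(b)$ and $W^0(b)aI$ are \emph{each} compact when $a(\infty)=b(\infty)=0$ (Theorem~\ref{th:compactness-products}). The mechanism is an elementary factorization: approximate so that $a_0,b_0\in C_c^\infty(\R)$ and then further so that $k_n:=F^{-1}b_n\in C_c^\infty(\R)$; then $a_0W^0(b_n)$ factors as
\[
X(\R)\xrightarrow{\ \chi_{[-R,R]}I\ } L^1_{[R]}(\R)\xrightarrow{\ K_n\ } C_{[R']}(\R)\xrightarrow{\ a_0I\ } X(\R),
\]
where the outer arrows are bounded by Axioms (A5) and (A2)/(A4) of a Banach function space, and the middle convolution $K_n$ is compact by the classical Arzel\`a--Ascoli theorem (Lemma~\ref{le:compact-convolution}). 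This avoids the obstacle you flagged---finding a compactness criterion valid in an arbitrary $X(\R)$---by routing compactness through $L^1$ and $C$ on compacta, where no space-specific tools are needed. Your approximation steps (passing from $C_X(\dR)$ to $C(\dR)\cap V(\R)$ via~\eqref{eq:algebra-CX}, and controlling errors by~\eqref{eq:Stechkin}) are exactly what the paper does; it is only your proposed endgame (Hankel structure, interpolation, $X(\R)$-adapted Fr\'echet--Kolmogorov) that is more speculative than necessary.
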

It is well known that a Banach function space $X(\R)$ is reflexive if and 
only if the space $X(\R)$ and its associate space $X'(\R)$ are separable 
(see 
{
\cite[Chap.~1, \S2, Theorem 4 and \S3, Corollary 1 to Theorem 7]{L55} or
} 
\cite[Chap.~1, Corollaries 4.4 and 5.6]{BS88}). So, it is natural to 
ask whether the assumption of the reflexivity of the space $X(\R)$ in 
Theorem~\ref{th:FKK} can be relaxed to the assumption of the separability 
of the space $X(\R)$. Our main result says that this is impossible.
\begin{theorem}[Main result]
\label{th:main}
There exists a separable non-reflexive Banach function space $X(\R)$ such that
\begin{enumerate}
\item[(a)]
the Hardy-Littlewood maximal operator is bounded on $X(\R)$ and on 
its associate space $X'(\R)$;
\item[(b)]
the algebra $\cA_{X(\R)}$ does not contain all rank one operators.
\end{enumerate}
\end{theorem}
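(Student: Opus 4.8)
The plan is to establish the theorem for the Lorentz space $X(\R)=L^{p,1}(\R)$ with an arbitrary fixed $p\in(1,\infty)$, by exhibiting a single rank one operator that fails to lie in $\cA_{X(\R)}$. That $L^{p,1}(\R)$ is a separable non\mbox{-}reflexive Banach function space is standard: its norm is absolutely continuous (the second exponent equals $1<\infty$), so $L^{p,1}(\R)$ is separable, whereas its associate space $X'(\R)=L^{p',\infty}(\R)$ does not have absolutely continuous norm and is therefore non-separable, so $X(\R)$ is not reflexive. Part~(a) is the well-known boundedness of the Hardy--Littlewood maximal operator on $L^{p,1}(\R)$ and on $L^{p',\infty}(\R)=X'(\R)$, valid because $1<p<\infty$, equivalently $1<p'<\infty$. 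So everything reduces to part~(b).

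The key object is the absolutely continuous part $Z:=\big(X'(\R)\big)_a$ of the non-separable space $X'(\R)=L^{p',\infty}(\R)$. It is a closed order ideal in $X'(\R)$, and it is a \emph{proper} subspace: for $g^*(x):=|x|^{-1/p'}\chi_{(0,1)}(x)$ one has $g^*\in L^{p',\infty}(\R)$, while $\|g^*\chi_{(0,\eps)}\|_{L^{p',\infty}(\R)}$ does not tend to $0$ as $\eps\to0^+$, so $g^*\in X'(\R)\setminus Z$. The heart of the argument is the assertion that \emph{for every $T\in\cA_{X(\R)}$ the adjoint operator $T^*$ maps $Z$ into $Z$}. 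Granting this, fix any $h^*\in X(\R)$ with $h^*\ne0$ (say $h^*=\chi_{(0,1)}$) and consider the rank one operator $Kf:=\big(\int_\R fg^*\,dx\big)h^*$, which is bounded on $X(\R)$ since $g^*\in X'(\R)$. As $X(\R)$ has absolutely continuous norm, its Banach dual is identified with $X'(\R)$, and a direct computation gives $K^*\psi=\big(\int_\R h^*\psi\,dx\big)g^*$ for $\psi\in X'(\R)$, so the range of $K^*$ equals $\{\lambda g^*:\lambda\in\C\}$. Pick $\psi_0\in C_c^\infty(\R)\subset Z$ with $\int_\R h^*\psi_0\,dx\ne0$ (possible because $h^*\ne0$). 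If $K$ belonged to $\cA_{X(\R)}$, the assertion would give $g^*=\big(\int_\R h^*\psi_0\,dx\big)^{-1}K^*\psi_0\in Z$, contradicting $g^*\notin Z$. Hence $K\notin\cA_{X(\R)}$, and $\cA_{X(\R)}$ does not contain all rank one operators.

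To prove the assertion, first note that, since the closed subalgebra of $\cB(X(\R))$ generated by a set coincides with the one generated by its norm closure, the definition of $C_X(\dR)$ gives
\[
\cA_{X(\R)}=\operatorname{clos}_{\cB(X(\R))}\operatorname{alg}\big\{aI,\ W^0(b)\ :\ a\in C(\dR),\ b\in C(\dR)\cap V(\R)\big\}.
\]
The adjoint map $T\mapsto T^*$ is an isometric algebra anti-isomorphism of $\cB(X(\R))$ onto a closed subalgebra of $\cB(X'(\R))$; since $(aI)^*=aI$ and the adjoint of $W^0(b)$ is the Fourier convolution operator $W^0(\check b)$ with $\check b(\xi):=b(-\xi)$ (again a continuous function of bounded variation, and bounded on $X'(\R)$ because $W^0(b)$ is bounded on $X(\R)$ by~\eqref{eq:Stechkin}), it follows that
\[
\cA_{X(\R)}^*:=\{T^*:T\in\cA_{X(\R)}\}=\operatorname{clos}_{\cB(X'(\R))}\operatorname{alg}\big\{aI,\ W^0(\check b)\ :\ a\in C(\dR),\ b\in C(\dR)\cap V(\R)\big\}.
\]
Because $Z$ is closed and the property of mapping $Z$ into $Z$ is inherited by sums, products and $\cB(X'(\R))$-limits, it suffices to check this property for the generators. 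For $aI$ it is immediate: $|a\psi|\le\|a\|_{L^\infty(\R)}|\psi|$ and $Z$ is an order ideal. For $W^0(\check b)$, since $C_c^\infty(\R)$ is dense in $Z$ and $Z$ is closed, it is enough that $W^0(\check b)\phi\in Z$ for $\phi\in C_c^\infty(\R)$: here $\check b\,\widehat{\phi}\in L^1(\R)$, so $W^0(\check b)\phi=F^{-1}(\check b\,\widehat{\phi})$ is bounded, and one integration by parts (using the bounded variation of $\check b$ and $\widehat{\phi}\in\mathcal{S}(\R)$) yields $|(W^0(\check b)\phi)(t)|\le\mathrm{const}\,(1+|t|)^{-1}$; therefore $W^0(\check b)\phi\in L^{p'}(\R)\subset\big(L^{p',\infty}(\R)\big)_a=Z$.

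The main obstacle is precisely this last step: showing that the convolution operators $W^0(c)$ with $c\in C(\dR)\cap V(\R)$ carry $C_c^\infty(\R)$ into the absolutely continuous part of $L^{p',\infty}(\R)$, which rests on the $O(|t|^{-1})$ decay of $W^0(c)\phi$ forced by the bounded variation of $c$, together with the elementary inclusion $L^{p'}(\R)\subset\big(L^{p',\infty}(\R)\big)_a$. The remaining ingredients --- the adjoint identities, the properness of $Z$, and the density of $C_c^\infty(\R)$ in $Z$ --- are routine. Conceptually, the argument shows that although $X(\R)$ is separable, its associate space is not, and the operators of $\cA_{X(\R)}$ can only ``see'', through their adjoints, the separable absolutely continuous part $Z$ of $X'(\R)$.
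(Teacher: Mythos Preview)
Your argument is correct, and it follows a genuinely different route from the paper's. The paper never passes to adjoints; instead it proves a quantitative ``tail'' estimate on $X(\R)$ itself: for compactly supported $a$ and $b\in C_c(\R)\cap V(\R)$ one has $\|aW^0(b)\chi_{\R\setminus[-R,R]}I\|_{\cB(X(\R))}\to 0$ as $R\to\infty$ (Theorem~\ref{th:key-estimate}), and then shows that any $T\in\cA_{X(\R)}$ can be approximated by combinations for which $\|\chi_{[-R_1,R_1]}T\chi_{\R\setminus[-R,R]}I\|$ is small; the rank one operator built from $g(x)=|x|^{-1/p'}\in L^{p',\infty}(\R)$ violates this because $\|\chi_{\R\setminus[-R,R]}g\|_{(p',\infty)}$ stays bounded away from zero (Theorem~\ref{th:main-key-step}). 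Thus the paper exploits failure of decay of $g$ \emph{at infinity}, whereas you exploit a \emph{local} singularity to produce $g^*\in X'(\R)\setminus(X'(\R))_a$ and then show that adjoints of elements of $\cA_{X(\R)}$ preserve the absolutely continuous part $Z=(X'(\R))_a$. Both proofs ultimately rest on the same $O(|t|^{-1})$ decay forced by bounded variation (the paper applies it to the kernel $F^{-1}b$, you to $W^0(\check b)\phi$), but your invariant ``$T^*(Z)\subset Z$'' is a clean structural obstruction that explains conceptually why non-reflexivity matters: $Z\subsetneq X'(\R)$ precisely when $X'(\R)$ is non-separable, i.e.\ when $X(\R)$ is non-reflexive. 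The paper's approach, on the other hand, yields explicit decay rates (the $\log^{-n}(R+2)$ bound) and is formulated as a general sufficient condition on $X(\R)$ (Theorem~\ref{th:main-key-step}) that does not rely on Lorentz-specific facts such as $L^{p'}(\R)\subset(L^{p',\infty}(\R))_a$.
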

This theorem means that 
{the} usual methods of the Fredholm study of algebras
of convolution type operators with discontinuous data on non-reflexive
separable Banach function spaces will require a modification to overcome
an obstacle that certain compact operators do not belong to the algebra
$\cA_{X(\R)}$ and, therefore, the quotient algebra $\cA_{X(\R)}/\cK(X(\R))$
cannot be defined. 

In fact, 
{
Theorem~\ref{th:main} holds for a familiar example of separable and 
non-reflexive Banach function spaces, namely the classical Lorentz 
spaces $L^{p,1}(\R)$ with $1<p<\infty$. }
Let us recall their 
definition. The distribution function $\mu_f$ of a
measurable function $f:\R\to\C$ is given by
\[
\mu_f(\lambda):=|\{x\in\R:|f(x)|>\lambda\}|,
\quad\lambda\ge 0.
\]
The non-increasing
rearrangement of $f$ is the function $f^*$ defined on $[0,\infty)$ by
\[
f^*(t)=\inf\{\lambda:\mu_f(\lambda)\le t\},
\quad t\ge 0
\]
(see, e.g., \cite[Chap.~3, Definitions 1.1 and 1.5]{BS88}).

For given $1 < p < \infty$ and $1 \le q \le \infty$, the Lorentz
space $L^{p, q}(\mathbb{R})$ consist of all measurable 
functions  $f : \mathbb{R} \to \mathbb{C}$ such that the norm
\[
\|f\|_{(p, q)} 
:=  
\left\{\begin{array}{cl}
\displaystyle\left(\int_0^\infty \left(t^{1/p} f^{**}(t)\right)^q\, 
\frac{dt}{t}\right)^{1/q},     
&  q < \infty,
\\[3mm]
\displaystyle\sup_{0 < t < \infty} \left(t^{1/p} f^{**}(t)\right),   
&   q = \infty,
\end{array}\right.
\]
is finite, where 
\[
{
f^{**}(t) := \frac1t\, \int_0^t f^*(x)\, dx
}
\]
(see \cite[Chap.~4, Lemma 4.5]{BS88}).
\begin{theorem}\label{th:main-Lorentz}
Let $1<p<\infty$. The Lorentz space $L^{p,1}(\R)$ is a separable
and non-reflexive Banach function space satisfying assumption {\rm(a)}
of Theorem~{\rm\ref{th:main}} and such that the algebra $\cA_{L^{p,1}(\R)}$ 
does not contain all rank one operators.
\end{theorem}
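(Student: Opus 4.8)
The plan is to verify that $X(\R)=L^{p,1}(\R)$ meets all the hypotheses and then invoke Theorem~\ref{th:main}, or rather the argument behind it. First I would recall the standard facts about Lorentz spaces: for $1<p<\infty$ and $q=1$, the functional $\|\cdot\|_{(p,1)}$ is a genuine norm (the use of $f^{**}$ rather than $f^*$ is precisely what guarantees this), $L^{p,1}(\R)$ is a rearrangement-invariant Banach function space, it is separable (simple functions with finite-measure support are dense, since $q=1<\infty$), and its associate space is $L^{p',\infty}(\R)$ with $1/p+1/p'=1$, which is \emph{not} separable. Hence $L^{p,1}(\R)$ is separable but non-reflexive by the criterion quoted in the excerpt (reflexivity $\iff$ both $X$ and $X'$ separable). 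This settles the first two assertions.

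Next I would establish assumption~(a), boundedness of the Hardy-Littlewood maximal operator $M$ on $L^{p,1}(\R)$ and on its associate space $L^{p',\infty}(\R)$. The cleanest route is the classical boundedness of $M$ on the Lorentz spaces $L^{r,s}(\R)$ for all $1<r<\infty$ and $1\le s\le\infty$; this follows from the Marcinkiewicz interpolation theorem applied to the weak-$(1,1)$ and strong-$(\infty,\infty)$ bounds for $M$, together with the fact that $L^{r,s}$ lies strictly between two Lebesgue spaces $L^{r_0}$ and $L^{r_1}$ with $r_0<r<r_1$. Applying this with $(r,s)=(p,1)$ and $(r,s)=(p',\infty)$ gives exactly what is needed; I would cite Bennett–Sharpley \cite{BS88} for the interpolation statement. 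With (a) in hand, inequality~\eqref{eq:Stechkin} and the whole framework of $C_{L^{p,1}}(\dR)$ and $\cA_{L^{p,1}(\R)}$ are available.

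Finally, the substantive part: showing $\cA_{L^{p,1}(\R)}$ does not contain all rank one operators. This is where the construction underlying Theorem~\ref{th:main} must be shown to apply to $L^{p,1}(\R)$ specifically. The expected mechanism is that a rank one operator $f\mapsto \langle f,g\rangle h$ lies in $\cA_{X(\R)}$ only if the functional $g$ (an element of $X'(\R)$) and the vector $h$ are suitably compatible with the structure of convolution operators with continuous symbols — and in the non-reflexive case one can exhibit a rank one operator, built from a function $h\in X(\R)$ together with a functional $g$ living in the non-separable associate space $L^{p',\infty}(\R)$, that cannot be approximated in operator norm by finite sums of products of multiplication operators $aI$ and convolutions $W^0(b)$ with $a,b$ continuous. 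The key obstruction is that every operator in $\cA_{X(\R)}$ is a norm limit of operators whose "symbolic" action at infinity and at spatial infinity is governed by $C(\dR)$ and $C_X(\dR)\subset C(\dR)$, so their commutators with translations, or their behaviour under a suitable net of test functions, decay in a way incompatible with a generic rank one operator when $X'(\R)$ is non-separable. I expect the main obstacle to be pinning down exactly this compatibility condition and producing the explicit $g\in L^{p',\infty}(\R)\setminus\overline{C_c(\R)}$ and $h\in L^{p,1}(\R)$ for which the rank one operator $h\otimes g$ is provably outside $\cA_{L^{p,1}(\R)}$; everything else (separability, non-reflexivity, maximal function bounds) is routine Lorentz-space bookkeeping.
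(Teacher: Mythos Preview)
Your treatment of the routine parts (separability, non-reflexivity via the separability criterion for $X'$, and boundedness of $M$ on $L^{p,1}$ and $L^{p',\infty}$) is essentially what the paper does; the paper appeals to Boyd indices and the Lorentz--Shimogaki theorem rather than Marcinkiewicz, but either route is fine.

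The genuine gap is in the substantive part. You correctly reduce to exhibiting a rank one operator $T_{g,h}f = h\int gf$ with $g\in L^{p',\infty}(\R)$ outside the closure of $C_c(\R)$, but you do not identify the actual obstruction mechanism, and your guess (``commutators with translations, or their behaviour under a suitable net of test functions'') is not the one that works. The paper's argument is a \emph{spatial truncation} estimate: one proves (Theorem~\ref{th:key-estimate}) that for compactly supported $a$ and $b\in C_c(\R)\cap V(\R)$ the kernel of $W^0(b)$ decays like $1/|x|$, whence $\|aW^0(b)\chi_{\{R\}}I\|_{\cB(X(\R))}\to 0$ as $R\to\infty$ (here $\chi_{\{R\}}=\chi_{\R\setminus[-R,R]}$). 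From this one deduces (Theorem~\ref{th:main-key-step}) that every $T\in\cA_{X(\R)}$ satisfies $\|\chi_{[-R_1,R_1]}T\chi_{\{R\}}I\|\to 0$ for each fixed $R_1$. On the other hand, $\|\chi_{[-R_1,R_1]}T_{g,h}\chi_{\{R\}}I\|=\|\chi_{[-R_1,R_1]}h\|_{X}\,\|\chi_{\{R\}}g\|_{X'}$, so the precise condition you need on $g$ is not merely $g\notin\overline{C_c(\R)}$ but the quantitative statement $\inf_{R>0}\|\chi_{\{R\}}g\|_{X'(\R)}>0$.

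The second missing ingredient is the explicit $g$: the paper takes $g(x)=|x|^{-1/p'}$, computes $(\chi_{\{R\}}g)^*(t)=2^{1/p'}(t+2R)^{-1/p'}$ directly, and reads off $\|\chi_{\{R\}}g\|_{(p',\infty)}\ge 2^{1/p'}$ for every $R>0$. Without this concrete choice and computation, and without the truncation estimate above, the proof is not complete; these two pieces are the heart of the argument, not bookkeeping.
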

The paper is organized as follows. 
In Section~\ref{sec:auxiliary}, we collect definitions of a Banach function
space and its associate space $X'(\R)$, recall that the set of Fourier
multipliers $\cM_{X(\R)}$ on a separable Banach function space $X(\R)$, 
such that the Hardy-Littlewood maximal operator $M$ is bounded on $X(\R)$
and on its associate space $X'(\R)$, is continuously embedded into 
$L^\infty(\R)$. Consequently, $\cM_{X(\R)}$ is a unital Banach algebra.
Further, we prove several lemmas on approximation of continuous
functions (or Fourier multipliers) vanishing at infinity by
compactly supported continuous functions (or Fourier multipliers, respectively).

In Section~\ref{sec:commutativity}, we show that if $X(\R)$ is a
separable Banach function space  such that the Hardy-Littlewood maximal 
operator $M$ is bounded on $X(\R)$ and on its associate space $X'(\R)$
and $a\in C(\dR)$, $b\in C_X(\dR)$, then the commutator
$aW^0(b)-W^0(b)aI$ is compact on the space $X(\R)$. Combining this result 
with Theorem~\ref{th:FKK}, we arrive at Theorem~\ref{th:algebra-Api}.

Section~\ref{sec:main-proof} is devoted to the proof of 
Theorems~\ref{th:main} and~\ref{th:main-Lorentz}. 
For $R>0$, let $\chi_{\{R\}}:=\chi_{\R\setminus[-R,R]}$.
We show that
if $a$ is a compactly supported continuous function and
$b$ is a compactly supported function of bounded variation,
then the norm of the operator $aW^0(b)\chi_{\{R\}}I$  goes to zero as 
$R\to\infty$. If a Banach function space $X(\R)$ is separable and 
non-reflexive, its associate space $X'(\R)$ may contain a function
$g$ such that $\|g\chi_{\{R\}}\|_{X'(\R)}$ is bounded away from
zero for all $R>0$ (this cannot happen if $X(\R)$ is reflexive).
If, in addition, the Hardy-Littlewood operator is bounded on $X(\R)$
and on its associate space $X(\R)$, then we show that 
for every $h\in X(\R)\setminus\{0\}$ the rank one operator
\[
(T_{g,h}f)(x) := h(x) \int_\R g(y) f(y)\, dy 
\]
does not belong to the algebra $\cA_{X(\R)}$, which implies
Theorem~\ref{th:main} under the assumption that the 
function $g\in X'(\R)$  mentioned  above does indeed exist. 
Let $1<p<\infty$ and $1/p+1/p'=1$.
Finally, we prove Theorem~\ref{th:main-Lorentz} first recalling that
the classical Lorentz space $L^{p,1}(\R)$ is a separable non-reflexive 
Banach function space with the associate space $L^{p',\infty}(\R)$, that 
the Hardy-Littlewood maximal operator is bounded on both $L^{p,1}(\R)$ 
and $L^{p',\infty}(\R)$; and then showing that the function 
$g(x)=|x|^{-1/p'}$ belongs to $L^{p',\infty}(\R)$ and 
$\|\chi_{\{R\}}g\|_{(p',\infty)}$ is bounded away from zero for all 
$R>0$. This completes the proof of Theorem~\ref{th:main-Lorentz} and,
thus, of Theorem~\ref{th:main}.

In Section~\ref{sec:SO}, we define the algebra of continuous
Fourier multipliers $C_{X}^0(\dR)$ as the closure of $\C\dot{+} C_c^\infty(\R)$,
where $C_c^\infty(\R)$ is the set of smooth compactly supported functions
and $\C$ denotes the set of constant functions. It is not difficult
to see that $C_X^0(\dR)\subset C_X(\dR)$. We do not know whether
these algebras coincide, in general. We prove a possible refinement of 
Theorem~\ref{th:FKK} for the algebra $\cA^0_{X(\R)}$, where the latter
algebra is defined in the same way as the algebra $\cA_{X(\R)}$
with $C_X(\dR)$ replaced by $C_X^0(\dR)$. Further, we recall
the definition of the set of slowly oscillating functions $SO^\diamond$
and slowly oscillating Fourier multipliers $SO_{X(\R)}^\diamond$
(see \cite{FKK20,K15c}). Since $C(\dR)\subset SO^\diamond$ and
$C_X^0(\dR)\subset SO_{X(\R)}^\diamond$, 
{the} ideal of compact operators $\cK(X(\R))$ is contained
in the algebra $\mathcal{D}_{X(\R)}$ generated by the operators $aI$
with $a\in SO^\diamond$ and $b\in SO_{X(\R)}^\diamond$ under the assumptions
that $X(\R)$ is a reflexive Banach function space such that the 
Hardy-Littlewood maximal operator is bounded on $X(\R)$ and on its associate
space $X'(\R)$. 
{
We conclude the paper with an open question on whether or not 
the quotient algebra 
$\mathcal{D}_{X(\R)}^\pi:=\mathcal{D}_{X(\R)}/\mathcal{K}(X(\R))$ 
is commutative in this case.}
\section{Auxiliary results}\label{sec:auxiliary}
\subsection{Banach function spaces}\label{sec:BFS}
The set of all Lebesgue measurable complex-valued functions on $\R$ is denoted
by $\fM(\R)$. Let $\fM^+(\R)$ be the subset of functions in $\fM(\R)$ whose
values lie  in $[0,\infty]$. For a measurable set $E\subset\R$, 
its Lebesgue measure and the characteristic function are denoted by $|E|$ and
$\chi_E$, respectively. Following 
{\cite[p.~3]{L55}
(see also \cite[Chap.~1, Definition~1.1]{BS88}
and \cite[Definition~6.1.5]{PKJF13}),} a 
mapping $\rho:\fM^+(\R)\to [0,\infty]$ is called a Banach function norm if,
for all functions $f,g, f_n \ (n\in\N)$ in $\fM^+(\R)$, for all
constants $a\ge 0$, and for all measurable subsets $E$ of $\R$,
the following properties hold:
\begin{eqnarray*}
{\rm (A1)} &\quad & \rho(f)=0  \Leftrightarrow  f=0\ \mbox{a.e.}, \
\rho(af)=a\rho(f), \
\rho(f+g) \le \rho(f)+\rho(g),\\
{\rm (A2)} &\quad &0\le g \le f \ \mbox{a.e.} \ \Rightarrow \ \rho(g)
\le \rho(f)
\quad\mbox{(the lattice property)},
\\
{\rm (A3)} &\quad &0\le f_n \uparrow f \ \mbox{a.e.} \ \Rightarrow \
       \rho(f_n) \uparrow \rho(f)\quad\mbox{(the Fatou property)},\\
{\rm (A4)} &\quad & 
{E \text{ is bounded}}  
\Rightarrow \rho(\chi_E) <\infty,\\
{\rm (A5)} &\quad & 
{E \text{ is bounded}}  
\Rightarrow \int_E f(x)\,dx \le C_E\rho(f),
\end{eqnarray*}
where $C_E \in (0,\infty)$ may depend on $E$ and $\rho$ but is
independent of $f$. 
When functions differing only on a set of measure zero
are identified, the set $X(\R)$ of functions $f\in\fM(\R)$
for which $\rho(|f|)<\infty$ is called a Banach function space. For each
$f\in X(\R)$, the norm of $f$ is defined by
\[
\left\|f\right\|_{X(\R)} :=\rho(|f|).
\]
With this norm and under natural linear space operations, the set $X(\R)$ 
becomes a Banach space (see 
{\cite[Chap.~1, \S1, Theorem 1]{L55} or} 
\cite[Chap.~1, Theorems~1.4 and~1.6]{BS88}). 
If $\rho$ is a Banach function norm, its associate norm $\rho'$ is defined on
$\fM^+(\R)$ by
\[
\rho'(g):=\sup\left\{
\int_{\R} f(x)g(x)\,dx \ : \ f\in \fM^+(\R), \ \rho(f) \le 1
\right\}, \quad g\in \fM^+(\R).
\]
{
Then $\rho'$ is itself 
a Banach function norm (see \cite[Chap.~1, \S1]{L55} or 
\cite[Chap.~1, Theorem~2.2]{BS88}).
}
The Banach function space $X'(\R)$ determined by the Banach function norm
$\rho'$ is called the associate space (K\"othe dual) of $X(\R)$.
The associate space $X'(\R)$ is a subspace of the (Banach) dual
space $[X(\R)]^*$.
{
\begin{remark}
We note that our definition of a Banach function space is slightly 
different from that found in \cite[Chap.~1, Definition~1.1]{BS88}
and \cite[Definition~6.1.5]{PKJF13}. 
In particular, in Axioms (A4) and (A5)
we assume that the set $E$ is a bounded set, whereas it is sometimes 
assumed that $E$ merely satisfies $|E| <\infty$. We do this so that the 
weighted Lebesgue spaces with Muckenhoupt weights satisfy
Axioms (A4) and (A5). Moreover, it is well known that
all main elements of the general theory of Banach function spaces
work with (A4) and (A5) stated for bounded sets \cite{L55} (see also 
the discussion at the beginning of Chapter~1 on page~2 of \cite{BS88}).
Unfortunately, we overlooked that
the definition of a Banach function space in our previous works
\cite{FK20,FKK-AFA,FKK19,FKK20,K15a,K15b,K15c,KS19,KS14}
had to be changed by replacing in Axioms (A4) and (A5)
the requirement of $|E|<\infty$ by the requirement 
that $E$ is a bounded set to include weighted Lebesgue spaces
in our considerations. However, the results proved in the above
papers remain true. 
\end{remark}
}
\subsection{Density of nice functions in Banach function spaces}
Let $C_c(\R)$ and $C_c^\infty(\R)$ denote the sets of continuous compactly
supported functions on $\R$ and {of} infinitely differentiable 
compactly supported functions on $\R$, respectively.
\begin{lemma}\label{le:density}
Let $X(\R)$ be a separable Banach function space. Then the sets $C_c(\R)$,
$C_c^\infty(\R)$ and $L^2(\R)\cap X(\R)$ are dense in the space $X(\R)$.
\end{lemma}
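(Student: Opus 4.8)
The plan is to establish the density statement in three stages, moving from the most classical dense set to the others, and then exploiting that the Banach function norm dominates local $L^1$ behaviour (Axiom (A5)) to transfer density along the chain $C_c^\infty(\R)\subset C_c(\R)\subset L^2(\R)\cap X(\R)\subset X(\R)$. First I would recall the standard fact that in a \emph{separable} Banach function space the simple functions with bounded support are dense; this follows from the absolute continuity of the norm, which for separable $X(\R)$ is equivalent to separability (see \cite[Chap.~1]{BS88}), together with Axiom (A4) which guarantees that characteristic functions of bounded sets lie in $X(\R)$. Indeed, given $f\in X(\R)$ and $\eps>0$, truncating $f$ both in height and in support and then approximating by a simple function pointwise while dominating by $|f|$, one uses the dominated convergence theorem \emph{for the norm} (i.e.\ absolute continuity) to get a simple function $s$ with bounded support and $\|f-s\|_{X(\R)}<\eps$.

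The heart of the argument is then the approximation of a single building block $\chi_E$, with $E\subset\R$ bounded and measurable, by functions in $C_c^\infty(\R)$. Here I would invoke regularity of Lebesgue measure: choose a compact $K$ and an open bounded $U$ with $K\subset E\subset U$ and $|U\setminus K|<\delta$, then pick $\varphi\in C_c^\infty(\R)$ with $0\le\varphi\le 1$, $\varphi\equiv 1$ on $K$ and $\operatorname{supp}\varphi\subset U$. Then $|\chi_E-\varphi|\le\chi_{U\setminus K}$ pointwise, so by the lattice property (A2),
\[
\|\chi_E-\varphi\|_{X(\R)}\le\|\chi_{U\setminus K}\|_{X(\R)}.
\]
The remaining point is that $\|\chi_{G}\|_{X(\R)}\to 0$ as $|G|\to 0$ for sets $G$ contained in a fixed bounded set — this is again exactly absolute continuity of the norm on the bounded-support characteristic functions, available because $X(\R)$ is separable. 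Combining this with the previous paragraph (and linearity of the norm estimate over the finitely many pieces of a simple function) shows $C_c^\infty(\R)$ is dense in $X(\R)$; a fortiori so is $C_c(\R)$, since $C_c^\infty(\R)\subset C_c(\R)$.

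Finally, for $L^2(\R)\cap X(\R)$ I would simply observe $C_c^\infty(\R)\subset L^2(\R)\cap X(\R)\subset X(\R)$: the first inclusion is trivial, the second because any measurable function in $X(\R)$ is in $X(\R)$; hence density of the smaller set $C_c^\infty(\R)$ forces density of the intermediate set $L^2(\R)\cap X(\R)$. I expect the main obstacle — or rather the main point requiring care — to be the justification that separability of $X(\R)$ yields absolute continuity of the norm on bounded sets, i.e.\ that $\|\chi_{G_n}\|_{X(\R)}\to0$ whenever $G_n$ are measurable subsets of a fixed bounded interval with $|G_n|\to 0$; this is where one genuinely uses the hypothesis rather than just the axioms (A1)--(A5), and it is the step one should cite precisely (e.g.\ \cite[Chap.~1, Theorem~3.11 and Corollary~5.6]{BS88}) rather than reprove. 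Everything else is bookkeeping with the lattice property and the triangle inequality.
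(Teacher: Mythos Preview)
Your argument is correct and is the standard route to this density result; the paper itself does not write out a proof but simply cites \cite[Lemma~2.12]{KS14} for the density of $C_c(\R)$ and $C_c^\infty(\R)$, and then uses exactly your final observation that $C_c(\R)\subset L^2(\R)\cap X(\R)\subset X(\R)$ to conclude density of $L^2(\R)\cap X(\R)$. Your sketch is presumably what lies behind the cited lemma, and your identification of the key step --- that separability of $X(\R)$ is equivalent to absolute continuity of the norm (via \cite[Chap.~1, Corollary~5.6]{BS88}) --- is precisely the point where the hypothesis enters.
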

The density of $C_c(\R)$ and $C_c^\infty(\R)$ in $X(\R)$ is shown in 
\cite[Lemma~2.12]{KS14}. Since $C_c(\R)\subset L^2(\R)\cap X(\R)\subset X(\R)$,
we conclude that $L^2(\R)\cap X(\R)$ is dense in $X(\R)$.
\subsection{Banach algebra \boldmath{$\cM_{X(\R)}$} of Fourier multipliers}
The following result plays an important role in this paper.
\begin{theorem}
\label{th:continuous-embedding}
Let $X(\R)$ be a separable Banach function space such that the
Hardy-Littlewood maximal operator $M$ is bounded on  $X(\R)$ or
on its associate space $X'(\R)$. If $a\in\cM_{X(\R)}$, then
\begin{equation}\label{eq:continuous-embedding}
\|a\|_{L^\infty(\R)}\le\|a\|_{\cM_{X(\R)}}.
\end{equation}
The constant $1$ on the right-hand side of \eqref{eq:continuous-embedding}
is best possible.
\end{theorem}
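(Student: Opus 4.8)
\textbf{Proof proposal for Theorem~\ref{th:continuous-embedding}.}
The plan is to establish the inequality $\|a\|_{L^\infty(\R)}\le\|a\|_{\cM_{X(\R)}}$ by a localization-and-scaling argument combined with a duality step, following the classical idea that a Fourier multiplier norm controls the supremum of the symbol because convolution operators commute with translations and dilations. First I would treat the case where the maximal operator $M$ is bounded on $X(\R)$. Fix $a\in\cM_{X(\R)}$; by definition $W^0(a)$ is bounded on $X(\R)$ with norm $\|a\|_{\cM_{X(\R)}}$. The key observation is that $W^0(a)$ commutes with the translation operators $\tau_h f(x)=f(x-h)$ (since convolution is translation-invariant) and, up to an isometric rescaling of the $L^2$-norm, interacts predictably with dilations $f\mapsto f(\delta\,\cdot)$. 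The subtlety here, absent in the Lebesgue-space setting, is that $X(\R)$ need not be rearrangement-invariant nor dilation-invariant, so the dilation trick does not transfer verbatim; instead I would use that $X(\R)$ is a Banach function space satisfying (A4)--(A5), so it contains all bounded compactly supported functions, together with the boundedness of $M$ to run a Lebesgue-point argument locally.

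The concrete route I would take: let $x_0\in\R$ be a Lebesgue point of $a$ (a.e. point is one, since $a\in L^\infty\subset L^1_{\rm loc}$), and aim to show $|a(x_0)|\le\|a\|_{\cM_{X(\R)}}$. Choose a nice test function, e.g. take $f=F^{-1}\varphi$ where $\varphi\in C_c^\infty(\R)$ is a bump concentrated near $x_0$ with $\varphi(x_0)=1$; then $W^0(a)f = F^{-1}(a\varphi)$, and since $a$ is continuous at the Lebesgue point after mollification, $a\varphi$ is close to $a(x_0)\varphi$ in a suitable sense. The heart of the matter is to compare $\|F^{-1}(a\varphi)\|_{X(\R)}$ with $|a(x_0)|\,\|F^{-1}\varphi\|_{X(\R)}$: rescaling $\varphi$ to be supported in a shrinking neighbourhood of $x_0$ forces $F^{-1}\varphi$ to spread out, and one must show the ratio of $X(\R)$-norms is asymptotically $1$. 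This is exactly where the boundedness of $M$ on $X(\R)$ enters — it gives control (via the standard pointwise estimate $|F^{-1}(m\,\widehat g)(x)|\lesssim \|m\|_* \, Mg(x)$ for symbols $m$ of bounded variation, or more precisely through the Stechkin-type bound \eqref{eq:Stechkin} applied to the error term $a\varphi - a(x_0)\varphi$) to show the correction is negligible in the limit. The case where $M$ is bounded on the associate space $X'(\R)$ rather than on $X(\R)$ itself is handled by duality: $W^0(a)^*$ acts on $X'(\R)$ and, up to reflection $a(x)\mapsto a(-x)$, is again a convolution operator with the same multiplier norm, so the first case applies to $X'(\R)$ and transfers back.

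The step I expect to be the main obstacle is the asymptotic comparison of $X(\R)$-norms under concentration/spreading, precisely because $X(\R)$ is only assumed to be a separable Banach function space with $M$ bounded, not rearrangement-invariant. One cannot simply invoke dilation-invariance of the norm. The cleanest fix is probably to avoid dilations altogether and instead argue as follows: suppose for contradiction that $|a(x_0)| > \|a\|_{\cM_{X(\R)}} =: N$ at some Lebesgue point $x_0$. Multiply $a$ by a fixed $C^\infty$ cutoff $\psi$ of bounded variation equal to $1$ near $x_0$; then $W^0(\psi a)$ is bounded (by \eqref{eq:Stechkin}, $\psi$ is a pointwise multiplier on $\cM_{X(\R)}$ in the relevant sense), and $\psi a$ is a Fourier multiplier that is continuous near $x_0$ and compactly supported. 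For such a symbol, the pointwise estimate coming from the boundedness of $M$ yields $\|W^0(\psi a)g\|_{X(\R)}\ge |a(x_0)|\,\|g\|_{X(\R)} - (\text{small})$ for appropriately chosen $g$, contradicting the multiplier bound once the small term is driven below $|a(x_0)|-N$. Finally, sharpness of the constant $1$ is immediate: taking $a\equiv 1$ (or more generally $a$ a constant of modulus $1$) gives $W^0(a)=I$, so $\|a\|_{\cM_{X(\R)}}=1=\|a\|_{L^\infty(\R)}$, showing \eqref{eq:continuous-embedding} cannot be improved to a strict inequality or a smaller constant.
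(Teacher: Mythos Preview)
Your proposal has genuine gaps, and the paper's route is quite different.

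\medskip
\textbf{What the paper actually does.} The paper's proof is two citations: first, boundedness of $M$ on $X(\R)$ or on $X'(\R)$ forces (via \cite[Lemma~3.2]{H12})
\[
\sup_{-\infty<a<b<\infty}\frac{1}{b-a}\,\|\chi_{(a,b)}\|_{X(\R)}\|\chi_{(a,b)}\|_{X'(\R)}<\infty,
\]
and then this symmetric condition alone implies \eqref{eq:continuous-embedding} with constant $1$ by the authors' earlier work \cite[inequality~(1.2) and Corollary~4.2]{KS19}. All the substance sits in \cite{KS19}; here the theorem is essentially quoted.

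\medskip
\textbf{Where your plan breaks.} Two concrete problems:
\begin{enumerate}
\item Your error control relies on applying the Stechkin inequality \eqref{eq:Stechkin} to $(a-a(x_0))\varphi$ (or to $\psi a$). But $a$ is an \emph{arbitrary} element of $\cM_{X(\R)}\subset L^\infty(\R)$: it need not be continuous, let alone of bounded variation, near $x_0$. So $(a-a(x_0))\varphi$ has no reason to have small $V(\R)$-norm as $\varphi$ concentrates, and \eqref{eq:Stechkin} is unavailable. Without this, the step ``$\|W^0(\psi a)g\|_{X(\R)}\ge |a(x_0)|\,\|g\|_{X(\R)}-(\text{small})$'' is exactly the theorem you are trying to prove, only restated.
\item Your duality reduction (``apply the first case to $X'(\R)$ when $M$ is bounded there'') needs the first case for $X'(\R)$, hence needs $X'(\R)$ separable. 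That is \emph{not} assumed, and fails precisely for the spaces the paper cares about: $X(\R)=L^{p,1}(\R)$ has $X'(\R)=L^{p',\infty}(\R)$, which is non-separable. The paper's approach avoids this because the intermediate condition above is symmetric in $X$ and $X'$, so no separability of $X'$ is needed.
\end{enumerate}
Your sharpness argument ($a\equiv 1$) is fine, but the inequality itself is not reachable along the lines you sketch without importing something equivalent to \cite{KS19}.
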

\begin{proof}
If the Hardy-Littlewood maximal operator $M$ is bounded on the space $X(\R)$
or on its associate space $X'(\R)$, then in view of \cite[Lemma~3.2]{H12}
we have
\[
\sup_{-\infty<a<b<\infty}
\frac{1}{b-a}\|\chi_{(a,b)}\|_{X(\R)}\|\chi_{(a,b)}\|_{X'(\R)}<\infty.
\]
If this condition is fulfilled, then inequality \eqref{eq:continuous-embedding}
follows from \cite[inequality (1.2) and Corollary~4.2]{KS19}.
\end{proof}
Inequality \eqref{eq:continuous-embedding} was established earlier in
\cite[Theorem~1]{K15b} with some constant on the right-hand side that depends 
on the space $X(\R)$ under the assumption that the operator $M$ is bounded on 
both $X(\R)$ and $X'(\R)$ (see  also \cite[Theorem~2.4]{FKK-AFA}).

Since \eqref{eq:continuous-embedding} is available, an easy adaptation of
the proof of \cite[Proposition 2.5.13]{G14} leads to the following
(we refer to the proof of \cite[Corollary~1]{K15b} for details).
\begin{corollary}
Let $X(\R)$ be a separable Banach function space such that the
Hardy-Littlewood maximal operator $M$ is bounded on $X(\R)$ or
on its associate space $X'(\R)$. Then the set of Fourier multipliers
$\cM_{X(\R)}$ is a Banach algebra under pointwise operations and the norm
$\|\cdot\|_{\cM_{X(\R)}}$.
\end{corollary}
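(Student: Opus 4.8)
The plan is to prove completeness, since the excerpt already records that $\cM_{X(\R)}$ is a unital normed algebra under pointwise operations; submultiplicativity is immediate from $W^0(ab)=W^0(a)W^0(b)$ (a consequence of $W^0(a)=F^{-1}aF$) together with the submultiplicativity of the operator norm, and positivity of the norm follows because $\|a\|_{\cM_{X(\R)}}=0$ forces $\|a\|_{L^\infty(\R)}=0$ by Theorem~\ref{th:continuous-embedding}. The one remaining point is completeness, and here the decisive tool is again Theorem~\ref{th:continuous-embedding}: the inequality $\|a\|_{L^\infty(\R)}\le\|a\|_{\cM_{X(\R)}}$ turns a Cauchy sequence of multipliers into a Cauchy sequence in $L^\infty(\R)$, which is what lets us name a candidate limiting symbol.

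First I would take a Cauchy sequence $\{a_n\}$ in $\cM_{X(\R)}$. By the embedding it is Cauchy in $L^\infty(\R)$, so $a_n\to a$ in $L^\infty(\R)$ for some $a\in L^\infty(\R)$. Simultaneously, since $\|a_n-a_m\|_{\cM_{X(\R)}}=\|W^0(a_n)-W^0(a_m)\|_{\cB(X(\R))}$, the convolution operators form a Cauchy sequence in the complete space $\cB(X(\R))$ and hence converge in operator norm to some $T\in\cB(X(\R))$. It then remains to identify $T$ with $W^0(a)$, that is, to show $a\in\cM_{X(\R)}$ with $W^0(a)=T$; once this is established, $\|a_n-a\|_{\cM_{X(\R)}}=\|W^0(a_n)-T\|_{\cB(X(\R))}\to0$ completes the argument.

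To identify $T$, I would test on the dense subspace $L^2(\R)\cap X(\R)$ provided by Lemma~\ref{le:density}. Fix $f\in L^2(\R)\cap X(\R)$. On the one hand, $W^0(a_n)f=F^{-1}(a_n\widehat f)\to F^{-1}(a\widehat f)$ in $L^2(\R)$, because $\|a_n\widehat f-a\widehat f\|_{L^2}\le\|a_n-a\|_{L^\infty}\|\widehat f\|_{L^2}\to0$ and $F^{-1}$ is bounded on $L^2(\R)$. On the other hand, $W^0(a_n)f\to Tf$ in $X(\R)$. By Axiom~(A5), convergence in $X(\R)$ forces $\int_E|W^0(a_n)f-Tf|\,dx\le C_E\|W^0(a_n)f-Tf\|_{X(\R)}\to0$ for every bounded $E$, hence convergence in $L^1(E)$ and in measure on each bounded set. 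Passing to a common subsequence that also converges almost everywhere from the $L^2$ convergence, the two limits must coincide a.e., so $Tf=F^{-1}(a\widehat f)=W^0(a)f$. Thus $W^0(a)$ maps $L^2(\R)\cap X(\R)$ into $X(\R)$ and extends to the bounded operator $T$, giving $a\in\cM_{X(\R)}$ and $W^0(a)=T$.

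I expect the only delicate point to be this last reconciliation of two modes of convergence — in $L^2(\R)$ toward $F^{-1}(a\widehat f)$ and in $X(\R)$ toward $Tf$ — into a single almost-everywhere identity; Axiom~(A5) is exactly what bridges the two by providing local $L^1$ control from the $X(\R)$-norm, after which a diagonal subsequence argument yields a common a.e.\ limit. Everything else is a routine transcription of the $L^p$ argument of \cite[Proposition~2.5.13]{G14} (cf.\ the proof of \cite[Corollary~1]{K15b}), with Theorem~\ref{th:continuous-embedding} replacing the elementary estimate $\|a\|_{L^\infty}\le\|a\|_{\cM_p}$ used there.
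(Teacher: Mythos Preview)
Your proposal is correct and follows essentially the same approach as the paper: the paper's proof is simply a reference to \cite[Proposition~2.5.13]{G14} and \cite[Corollary~1]{K15b}, noting that the key input is the embedding \eqref{eq:continuous-embedding}, and what you have written is precisely that adaptation spelled out in detail (including the same two citations). The only extra care you take---using Axiom~(A5) to reconcile the $X(\R)$-limit with the $L^2(\R)$-limit via local $L^1$ convergence---is exactly the right bridge in the Banach function space setting.
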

\subsection{Approximation of continuous functions vanishing at infinity}
Let $C_0(\R)$ denote the set of all continuous functions on $\R$ that vanish 
at $\pm\infty$. 
\begin{lemma}\label{le:approximating-vanishing}
For a function $\upsilon \in C_c^\infty(\mathbb{R})$ such that 
$0 \le \upsilon \le 1$ and $\upsilon(x) = 1$ when $|x| \le 1$, let 
\[
\upsilon_n(x) := \upsilon(x/n), \quad x\in\R, \quad n \in \mathbb{N}.
\]
\begin{enumerate}
\item[(a)] If $a\in C_0(\R)$, then
\begin{equation}\label{eq:approximating-vanishing-1}
\lim_{n\to\infty}\|a-\upsilon_n a\|_{L^\infty(\R)}=0.
\end{equation}

\item[(b)] If $a\in C_0(\R)\cap V(\R)$, then
\begin{equation}\label{eq:approximating-vanishing-2}
\lim_{n\to\infty}\|a-\upsilon_n a\|_{V(\R)}=0.
\end{equation}
\end{enumerate}
\end{lemma}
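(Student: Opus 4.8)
The plan is to prove both parts of Lemma~\ref{le:approximating-vanishing} by the same elementary device: write $a-\upsilon_n a=(1-\upsilon_n)a$ and exploit that $1-\upsilon_n$ vanishes identically on $[-n,n]$, while $0\le 1-\upsilon_n\le 1$ everywhere. The key point is that for $a\in C_0(\R)$ the quantity controlling everything is the tail $\sup_{|x|\ge n}|a(x)|$, which tends to $0$ by the definition of $C_0(\R)$.

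For part (a), I would estimate
\[
\|a-\upsilon_n a\|_{L^\infty(\R)}
=\|(1-\upsilon_n)a\|_{L^\infty(\R)}
\le\sup_{|x|\ge n}|a(x)|,
\]
since $1-\upsilon_n$ is supported in $\{|x|\ge n\}$ and bounded by $1$ there. As $a\in C_0(\R)$, given $\eps>0$ there is $N$ with $|a(x)|<\eps$ for $|x|\ge N$, hence the right-hand side is $<\eps$ for all $n\ge N$. This gives \eqref{eq:approximating-vanishing-1}.

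For part (b), recall $\|b\|_{V(\R)}=\|b\|_{L^\infty(\R)}+V(b)$, so in view of part (a) it suffices to show $V((1-\upsilon_n)a)\to 0$. Here I would use the standard product estimate for total variation: for $b,c\in V(\R)\cap L^\infty(\R)$ one has $V(bc)\le \|b\|_{L^\infty(\R)}V(c)+\|c\|_{L^\infty(\R)}V(b)$. Applying this with $b=1-\upsilon_n$ and $c=a$ gives
\[
V((1-\upsilon_n)a)
\le \|1-\upsilon_n\|_{L^\infty(\R)}\,V(a)+\|a\|_{L^\infty(\R)}\,V(1-\upsilon_n),
\]
which, however, does not obviously tend to $0$ because $\|1-\upsilon_n\|_{L^\infty(\R)}=1$. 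So instead I would localize: since $1-\upsilon_n$ is supported in $\{|x|\ge n\}$, the variation of $(1-\upsilon_n)a$ only ``sees'' the behaviour of $a$ on $(-\infty,-n]\cup[n,\infty)$ together with the transition. A clean way is to split any partition point set and write $V((1-\upsilon_n)a)\le V\big((1-\upsilon_n)a;(-\infty,-n]\big)+V\big((1-\upsilon_n)a;[-n,n]\big)+V\big((1-\upsilon_n)a;[n,\infty)\big)$; the middle term is $0$ since $(1-\upsilon_n)a$ vanishes on $[-n,n]$ (using $\upsilon_n\equiv1$ there), and on each outer interval I apply the product estimate restricted to that interval, obtaining
\[
V\big((1-\upsilon_n)a;[n,\infty)\big)
\le \sup_{x\ge n}|1-\upsilon_n(x)|\cdot V(a;[n,\infty))+\sup_{x\ge n}|a(x)|\cdot V(1-\upsilon_n;[n,\infty)),
\]
and similarly on $(-\infty,-n]$. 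Now $V(a;[n,\infty))+V(a;(-\infty,-n])\to 0$ as $n\to\infty$ because $V(a)<\infty$ (tails of a convergent ``sum''), while $V(1-\upsilon_n;[n,\infty))=V(\upsilon_n;[n,\infty))\le V(\upsilon)<\infty$ is bounded uniformly in $n$ and is multiplied by $\sup_{|x|\ge n}|a(x)|\to 0$. Both contributions vanish, giving $V((1-\upsilon_n)a)\to 0$ and hence \eqref{eq:approximating-vanishing-2}.

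The main obstacle is the naive product bound failing in part (b): one must not bound $\|1-\upsilon_n\|_{L^\infty}$ crudely by $1$ against $V(a)$, but rather confine the estimate to the tails $\{|x|\ge n\}$ where $a$ itself is small in both sup-norm and variation. Once the variation is split over $(-\infty,-n]$, $[-n,n]$ (zero), and $[n,\infty)$, everything reduces to two facts: the tail sup-norm of $a$ tends to $0$ (definition of $C_0(\R)$) and the tail variation of $a$ tends to $0$ (finiteness of $V(a)$), combined with the uniform bound $V(\upsilon_n)\le V(\upsilon)$.
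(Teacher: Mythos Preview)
Your proposal is correct and follows essentially the same route as the paper: both arguments observe that $(1-\upsilon_n)a$ is supported in $\{|x|\ge n\}$, apply the product estimate for total variation on this tail region, and conclude by combining the three facts $V(a;\R\setminus[-n,n])\to 0$, $\|a\|_{L^\infty(\R\setminus[-n,n])}\to 0$, and $V(1-\upsilon_n)\le V(\upsilon)$. The paper's write-up is slightly terser (it writes $V\big((1-\upsilon_n)a\big)=V\big((1-\upsilon_n)a;\R\setminus[-n,n]\big)$ directly rather than splitting into three intervals), but the content is identical.
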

\begin{proof}
(a) If $a\in C_0(\R)$, then for every $\eps>0$ there exists $N\in\N$
such that 
\[
\sup_{x\in\R\setminus[-N,N]}|a(x)|<\frac{\eps}{2}.
\]
For all $n>N$ and $x\in[-N,N]$, we have $v_n(x)=1$. Since $0\le\upsilon_n\le 1$,
for $n>N$, we get
\[
\|a-\upsilon_n a\|_{L^\infty(\R)}=
\sup_{x\in\R\setminus[-N,N]}|a(x)-\upsilon_n(x)a(x)|
\le 2\sup_{x\in\R\setminus[-N,N]}|a(x)|<\eps,
\]
which completes the proof of equality \eqref{eq:approximating-vanishing-1}.

(b) Let $V(g;\Omega)$ denote the total variation of a function $g$ 
over a union of intervals $\Omega \subset \mathbb{R}$. Then for all $n\in\N$,
\begin{align}
V(a - \upsilon_n a) 
=& 
V(a(1 - \upsilon_n);\mathbb{R}\setminus [-n, n]) 
\nonumber\\
\le & 
V(a;\mathbb{R}\setminus [-n, n]) 
\|1 - \upsilon_n\|_{L^\infty(\mathbb{R}\setminus [-n, n])} 
\nonumber\\
&+ 
\|a\|_{L^\infty(\mathbb{R}\setminus [-n, n])} 
V(1 - \upsilon_n;\mathbb{R}\setminus [-n, n]) 
\nonumber\\
\le & 
V(a;\mathbb{R}\setminus [-n, n]) 
+ 
\|a\|_{L^\infty(\mathbb{R}\setminus [-n, n])} V(\upsilon).
\label{eq:approximating-vanishing-3}
\end{align}
Since $a\in C_0(\R)$, we have
\begin{equation}\label{eq:approximating-vanishing-4}
\lim_{n\to\infty}\|a\|_{L^\infty(\R\setminus[-n,n])}=0
\end{equation}
(see the proof of part (a)).
On the other hand,
\begin{align}
\lim_{n\to\infty}V(a;\mathbb{R}\setminus [-n, n])
&=
\lim_{n\to\infty}\big(V(a)-V{(a;[-n,n])}\big)
\nonumber\\
&=V(a)-V(a)=0.
\label{eq:approximating-vanishing-5}
\end{align}
It follows from \eqref{eq:approximating-vanishing-3}--%
\eqref{eq:approximating-vanishing-5} that
\begin{equation}\label{eq:approximating-vanishing-6}
\lim_{n\to\infty}V(a-\upsilon_n a)=0.
\end{equation}
Combining equalities \eqref{eq:approximating-vanishing-1}
and \eqref{eq:approximating-vanishing-6},
we arrive at equality \eqref{eq:approximating-vanishing-2}.
\end{proof}
\subsection{Approximation of continuous Fourier multipliers vanishing 
at {infinity}}
\begin{lemma}\label{le:approximating-vanishing-multipliers}
Let $X(\R)$ be a Banach function space such that the Hardy-Little\-wood
maximal operator $M$ is bounded on $X(\R)$ and on its associate
space $X'(\R)$. 

\begin{enumerate}
{
\item[(a)] If $b\in C_0(\R)\cap V(\R)$ and $\{\upsilon_n\}_{n=1}^\infty$
is the sequence of functions in $C_c^\infty(\mathbb{R})$
defined in Lemma~\ref{le:approximating-vanishing}, then}
\[
{
\lim_{n\to\infty}\|b-\upsilon_n b\|_{\cM_{X(\R)}}=0.
}
\]

\item[{(b)}]
If $b\in C_X(\dR)$ is such that $b(\infty)=0$, then
there exists a sequence $\{b_n\}_{n=1}^\infty$ of functions in
$C_0(\R)\cap V(\R)$ such that
\[
\lim_{n\to\infty}\|b_n-b\|_{\cM_{X(\R)}}=0.
\]
\end{enumerate}
\end{lemma}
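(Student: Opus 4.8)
\textbf{Proof plan for Lemma~\ref{le:approximating-vanishing-multipliers}.}
The plan is to prove part~(a) first by combining the Stechkin-type inequality \eqref{eq:Stechkin} with Lemma~\ref{le:approximating-vanishing}(b), and then to derive part~(b) from part~(a) by a routine diagonal (triangle-inequality) argument through the definition of $C_X(\dR)$ as a closure.

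For part~(a): since the Hardy-Littlewood maximal operator $M$ is bounded on $X(\R)$ and on $X'(\R)$, inequality \eqref{eq:Stechkin} applies. If $b\in C_0(\R)\cap V(\R)$, then for each $n\in\N$ the function $b-\upsilon_n b$ is again of bounded variation on $\R$ (it is continuous, compactly supported away from a neighbourhood of the origin, and a difference/product of $V(\R)$ functions), so \eqref{eq:Stechkin} gives
\[
\|b-\upsilon_n b\|_{\cM_{X(\R)}}
=\|W^0(b-\upsilon_n b)\|_{\cB(X(\R))}
\le c_X\|b-\upsilon_n b\|_{V(\R)}.
\]
By Lemma~\ref{le:approximating-vanishing}(b), the right-hand side tends to $0$ as $n\to\infty$, which proves~(a). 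This is the short step.

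For part~(b): let $b\in C_X(\dR)$ with $b(\infty)=0$. By the definition \eqref{eq:algebra-CX} of $C_X(\dR)$ as the closure of $C(\dR)\cap V(\R)$ in the multiplier norm, there is a sequence $\{c_n\}\subset C(\dR)\cap V(\R)$ with $\|c_n-b\|_{\cM_{X(\R)}}\to 0$. The obstacle is that the approximants $c_n$ need not vanish at infinity, whereas the target functions in~(b) must lie in $C_0(\R)\cap V(\R)$; so one must first correct the value at infinity and then localize. I would set $\tilde c_n:=c_n-c_n(\infty)$, so that $\tilde c_n\in C_0(\R)\cap V(\R)$, and note that $c_n(\infty)\to b(\infty)=0$: indeed $|c_n(\infty)-b(\infty)|=\|c_n-b\|_{L^\infty(\R)}\le\|c_n-b\|_{\cM_{X(\R)}}\to 0$ by Theorem~\ref{th:continuous-embedding} (applied to the constant multiplier $c_n(\infty)-b(\infty)$, whose $\cM_{X(\R)}$-norm equals its modulus). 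Hence
\[
\|\tilde c_n-b\|_{\cM_{X(\R)}}
\le\|c_n-b\|_{\cM_{X(\R)}}+|c_n(\infty)|\,\|1\|_{\cM_{X(\R)}}
=\|c_n-b\|_{\cM_{X(\R)}}+|c_n(\infty)|\longrightarrow 0.
\]
Now each $\tilde c_n\in C_0(\R)\cap V(\R)$, but I still want members of $C_0(\R)\cap V(\R)$ — which $\tilde c_n$ already are — so the final step is purely a relabelling: the sequence $b_n:=\tilde c_n$ lies in $C_0(\R)\cap V(\R)$ and satisfies $\|b_n-b\|_{\cM_{X(\R)}}\to 0$, as required. (Part~(a) is not strictly needed for~(b) as just argued, but it is the natural companion statement; alternatively, one may first reduce $b$ to a compactly supported multiplier by applying~(a) to each $\tilde c_n$ and extracting a diagonal subsequence, which also yields $b_n\in C_c^\infty(\R)\cdot V(\R)\subset C_0(\R)\cap V(\R)$ if compact support is additionally desired.) The only genuinely delicate point throughout is the passage from $\cM_{X(\R)}$-convergence to $L^\infty$-control of the values at infinity, and that is supplied verbatim by Theorem~\ref{th:continuous-embedding}.
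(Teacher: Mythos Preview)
Your proposal is correct and follows essentially the same route as the paper: part~(a) is exactly the combination of the Stechkin-type inequality~\eqref{eq:Stechkin} with Lemma~\ref{le:approximating-vanishing}(b), and part~(b) is the same subtraction-of-the-value-at-infinity trick (the paper's $d_n-d_n(\infty)$ is your $\tilde c_n$), with Theorem~\ref{th:continuous-embedding} used to control $c_n(\infty)\to 0$. One small slip: in your chain $|c_n(\infty)-b(\infty)|=\|c_n-b\|_{L^\infty(\R)}$ the ``$=$'' should be ``$\le$'' (the value at infinity is bounded by, not equal to, the sup norm), but this does not affect the argument.
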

\begin{proof}
{
Part (a) follows from Lemma~\ref{le:approximating-vanishing}(b) and
inequality \eqref{eq:Stechkin}.
}

{(b)}
It follows from the definition of $C_X(\dR)$ that there exists a sequence
$\{d_n\}_{n=1}^\infty$ in $C(\dR)\cap V(\R)$ such that
\begin{equation}\label{eq:approximating-vanishing-multipliers-1}
\lim_{n\to\infty}\|d_n-b\|_{\cM_{X(\R)}}=0.
\end{equation}
Take $b_n:=d_n-d_n(\infty)$. Then $b_n\in C_0(\R)\cap V(\R)$. It follows
\eqref{eq:approximating-vanishing-multipliers-1} and 
Theorem~\ref{th:continuous-embedding} that $\{d_n\}_{n=1}^\infty$ converges
uniformly to $b$ on $\R$. In particular,
\begin{equation}\label{eq:approximating-vanishing-multipliers-2}
\lim_{n\to\infty}d_n(\infty)=b(\infty)={0.}
\end{equation}
Combining \eqref{eq:approximating-vanishing-multipliers-1} and
\eqref{eq:approximating-vanishing-multipliers-2}, we see that
\begin{align*}
\lim_{n\to\infty}\|b_n-b\|_{\cM_{X(\R)}}
&=
\lim_{n\to\infty}\|d_n-d_n(\infty)-b\|_{\cM_{X(\R)}}
\\
&\le
\lim_{n\to\infty}\|d_n-b\|_{\cM_{X(\R)}}
+\lim_{n\to\infty}|d_n(\infty)|=0,
\end{align*}
which completes the proof.
\end{proof}
\section{Commutativity of the algebra \boldmath{$\cA_{X(\R)}^\pi$}}
\label{sec:commutativity}
\subsection{Compactness of convolution operators from a subspace of compactly 
supported functions of $L^1(\R)$ to a subspace of compactly
supported functions of $C(\R)$}
Let $C^k(\mathbb{R})$, $k = 0, 1, 2, \dots$ be the space of functions with 
continuous bounded derivatives of all orders up to $k$,
$C(\mathbb{R}) = C^0(\mathbb{R})$. For any space of functions $Y(\mathbb{R})$ 
and any $R > 0$, let $Y_{[R]}(\mathbb{R})$ denote the subspace of 
$Y(\mathbb{R})$ consisting of functions with supports in $[-R, R]$.
As usual, the support of a function $f:\R\to\C$ will be denoted 
{by} $\operatorname{supp}f$.
\begin{lemma}\label{le:compact-convolution}
Suppose that $R_1,R_2>0$. If $k\in C^1(\R)$ is such that 
$\operatorname{supp}k\subset[-R_1,R_1]$, then the convolution operator with 
the kernel $k$ defined by
\begin{equation}\label{eq:compact-convolution}
(Kf)(x):=(k*f)(x)=\int_\R k(x-y)f(y)\,dy,
\quad x\in\R,
\end{equation}
is compact from the space $L^1_{[R_2]}(\R)$ to the space $C_{[R_1+R_2]}(\R)$.
\end{lemma}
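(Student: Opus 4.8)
The plan is to show that $K$ maps $L^1_{[R_2]}(\R)$ into $C_{[R_1+R_2]}(\R)$, is bounded there, and maps bounded sets to relatively compact sets via the Arzel\`a--Ascoli theorem. First I would check the mapping property: if $\operatorname{supp}f\subset[-R_2,R_2]$ and $x\notin[-R_1-R_2,R_1+R_2]$, then for every $y\in[-R_2,R_2]$ we have $|x-y|>R_1$, so $k(x-y)=0$ and hence $(Kf)(x)=0$; thus $\operatorname{supp}(Kf)\subset[-R_1-R_2,R_1+R_2]$. Continuity of $Kf$ (indeed uniform continuity, since $k\in C^1(\R)$ has compact support and is therefore Lipschitz) is standard: $|(Kf)(x)-(Kf)(x'))|\le\|k'\|_{L^\infty(\R)}\,|x-x'|\,\|f\|_{L^1(\R)}$. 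This simultaneously gives boundedness, $\|Kf\|_{L^\infty(\R)}\le\|k\|_{L^\infty(\R)}\|f\|_{L^1(\R)}$, so $K\in\cB(L^1_{[R_2]}(\R),C_{[R_1+R_2]}(\R))$.

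Next I would verify the compactness. Let $\{f_n\}$ be a bounded sequence in $L^1_{[R_2]}(\R)$, say $\|f_n\|_{L^1(\R)}\le C$. Then $\{Kf_n\}$ is uniformly bounded by $C\|k\|_{L^\infty(\R)}$ and, by the Lipschitz estimate above, uniformly equicontinuous on the compact interval $[-R_1-R_2,R_1+R_2]$ (with a common Lipschitz constant $C\|k'\|_{L^\infty(\R)}$). Since all the functions $Kf_n$ vanish outside this interval, by the Arzel\`a--Ascoli theorem there is a subsequence converging uniformly on $[-R_1-R_2,R_1+R_2]$, hence on all of $\R$, to a limit which necessarily lies in $C_{[R_1+R_2]}(\R)$. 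This shows $K$ is compact.

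I do not expect any serious obstacle here; the only points requiring a little care are (i) keeping track of supports to land precisely in $C_{[R_1+R_2]}(\R)$ rather than merely in $C_c(\R)$, and (ii) invoking the hypothesis $k\in C^1(\R)$ (rather than just $k\in C(\R)$) to get the uniform Lipschitz bound that yields equicontinuity with a constant independent of $n$ — one could alternatively use uniform continuity of $k$ together with the Heine--Cantor theorem, but the $C^1$ assumption makes the estimate cleanest. Everything else is a routine application of differentiation under the integral sign and Arzel\`a--Ascoli.
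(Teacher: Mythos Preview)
Your argument is correct and follows essentially the same route as the paper: both use the $C^1$ hypothesis on $k$ to control the derivative/Lipschitz constant of $Kf$ uniformly and then invoke Arzel\`a--Ascoli. The paper simply packages the first step as ``$K$ is bounded from $L^1_{[R_2]}(\R)$ into $C^1_{[R_1+R_2]}(\R)$'' (citing \cite[Propositions~4.18 and~4.20]{B11}) and the second as the compact embedding $C^1_{[R_1+R_2]}(\R)\hookrightarrow C_{[R_1+R_2]}(\R)$, whereas you carry out both steps by hand.
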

\begin{proof}
It follows from \cite[Propositions~4.18 and 4.20]{B11} that the operator
$K$ is bounded from the space $L^1_{[R_2]}(\R)$ to the space 
$C^1_{[R_1+R_2]}(\R)$. Further, by the Arzel\`a-Ascoli theorem
(see, e.g., \cite[Theorems~2.2.12 and 2.5.10]{PKJF13}), the space 
$C_{[R_1+R_2]}^1(\R)$ is compactly embedded into the space 
$C_{[R_1+R_2]}(\R)$, which completes the proof.
\end{proof}
\subsection{Compactness of products of Fourier convolution
operators and multiplication operators}
The main step in the proof of Theorem~\ref{th:algebra-Api} consists
of proving the following. 
\begin{theorem}\label{th:compactness-products}
Let $X(\R)$ be a separable Banach function space such that the Hardy-Littlewood 
maximal operator $M$ is bounded on  $X(\R)$ and on its associate 
space $X'(\R)$. If $a \in C(\dR)$ and $b \in C_X(\dR)$ are such 
that $a(\infty)= b(\infty)=0$, then 
\[
aW^0(b), W^0(b)aI \in \mathcal{K}(X(\mathbb{R})).
\]
\end{theorem}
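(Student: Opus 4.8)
The plan is to reduce the general case to the compactness lemma for convolutions with nice kernels (Lemma~\ref{le:compact-convolution}) by a density argument in two stages, using the approximation results of Section~\ref{sec:auxiliary}. First I would observe that it suffices to treat the operator $aW^0(b)$; the statement for $W^0(b)aI$ follows by passing to adjoints (note that $[X(\R)]^*\supset X'(\R)$ carries the analogous structure, $M$ being bounded on $X'(\R)$ as well), or alternatively by running the same argument with the roles of the factors interchanged. Since $\cK(X(\R))$ is a closed two-sided ideal in $\cB(X(\R))$, and since by Theorem~\ref{th:continuous-embedding} the multiplier norm dominates the $L^\infty$-norm, so that $\|aW^0(b_1)-aW^0(b_2)\|_{\cB(X(\R))}\le\|a\|_{L^\infty(\R)}\|b_1-b_2\|_{\cM_{X(\R)}}$, it is enough to prove compactness for $b$ ranging over a dense subset of $\{b\in C_X(\dR):b(\infty)=0\}$ in the multiplier norm. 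By Lemma~\ref{le:approximating-vanishing-multipliers}(b), the set $C_0(\R)\cap V(\R)$ is such a dense subset; and then by part~(a) of the same lemma, $\upsilon_n b\to b$ in $\cM_{X(\R)}$ with $\upsilon_n b\in V_{[R]}(\R)$ compactly supported. Likewise, using Lemma~\ref{le:density} and Lemma~\ref{le:approximating-vanishing}(a) (approximating $a\in C_0(\R)$ uniformly by $\upsilon_n a\in C_c(\R)$), we may assume $a$ is continuous and compactly supported. So it remains to prove: if $a\in C_c(\R)$ and $b\in V(\R)$ is compactly supported, then $aW^0(b)\in\cK(X(\R))$.

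For this final reduced claim, the idea is to realize $W^0(b)$ as a convolution operator with a kernel that, after multiplication by $a$, is smooth and compactly supported, so that Lemma~\ref{le:compact-convolution} applies. The subtlety is that $b\in V(\R)$ need not itself be the Fourier transform of an $L^1$ function, so $W^0(b)$ is not literally convolution by an integrable kernel. The standard device is to write $b$ in terms of its antiderivative: a compactly supported $b$ of bounded variation can be written as $b(x)=\int_{-\infty}^x d\mu(t)$ for a finite complex measure $\mu$ with compact support (the Lebesgue–Stieltjes measure $db$), and then $W^0(b)$ decomposes as a combination of the operator $W^0(\mathrm{sgn})$ type object (the Cauchy singular integral / Hilbert transform $S$) conjugated and averaged against $\mu$ — concretely $b(x)=\mathrm{const}+\tfrac12\int_\R \mathrm{sgn}(x-t)\,d\mu(t)$ up to constants, whence $W^0(b)$ is a superposition of modulated copies of $S$. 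Since multiplying by a compactly supported cutoff $a$ and composing with another compact cutoff turns $S$ (whose kernel is $\frac1{\pi i(x-y)}$) into an operator with a $C^\infty$ kernel off the diagonal, and the singularity is tamed because one can split $S=S_{\mathrm{loc}}+S_{\mathrm{smooth}}$ where the local part near the diagonal is handled by a separate argument... Actually a cleaner route: pick $\varphi\in C_c^\infty(\R)$ with $\varphi\equiv1$ on a neighborhood of $\operatorname{supp}a$ and write $aW^0(b)=aW^0(b)\varphi I+aW^0(b)(1-\varphi)I$; the first term has the cutoff $\varphi$ on the right and one checks $\varphi$ and $(1-\varphi)$ localize the kernel, while I expect the genuinely new input needed is that $aW^0(b)\psi I$ is an integral operator whose kernel is $a(x)\,\check\nu(x-y)\,\psi(y)$ with $\nu$ related to $b$, and that after inserting a smooth cutoff whose support avoids the diagonal this kernel lies in $C^1_{[R]}$, so Lemma~\ref{le:compact-convolution} gives compactness; the diagonal contribution is absorbed by choosing the decomposition so the diagonal piece involves $W^0$ of a multiplier that is actually smooth (a Schwartz function), for which $W^0$ maps into $C_c^\infty$-type kernels directly.

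Let me restate the mechanism I actually expect to use, since it is the heart of the matter. Write $b = b_0 + b_1$ where $b_0\in C_c^\infty(\R)$ and $V(b_1)$ is as small as we like — this is possible because a compactly supported BV function can be uniformly approximated in variation by mollification on the interior combined with a smooth matching of the (finitely many) jumps, OR more simply decompose $db$ into a smooth part plus a part of small total variation. For the smooth part $b_0$: since $b_0\in\mathcal{S}(\R)$, its inverse Fourier transform $k_0:=F^{-1}b_0\in\mathcal{S}(\R)$, so $W^0(b_0)$ is convolution by $k_0$. Multiplying on the left by $a\in C_c(\R)$ and inserting any $\psi\in C_c^\infty(\R)$, Lemma~\ref{le:compact-convolution} shows $aW^0(b_0)$ restricted between the corresponding compactly-supported subspaces is compact; combined with the density Lemma~\ref{le:density} ($C_c^\infty(\R)$ dense in $X(\R)$) and the fact that $a W^0(b_0)$ maps $X(\R)$ boundedly, a routine argument upgrades this to $aW^0(b_0)\in\cK(X(\R))$. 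For the small-variation part $b_1$: by \eqref{eq:Stechkin}, $\|aW^0(b_1)\|_{\cB(X(\R))}\le\|a\|_{L^\infty}c_X\|b_1\|_{V(\R)}$, which can be made arbitrarily small (the $L^\infty$ norm of $b_1$ is controlled by its variation since $b_1$ is compactly supported and vanishes at $\pm\infty$). Hence $aW^0(b)$ is a norm-limit of compact operators, therefore compact. The main obstacle, and the step I would spend the most care on, is exactly this last decomposition of a compactly supported BV function into a Schwartz function plus a BV function of arbitrarily small total variation — one must confirm that mollifying only affects the function on a set where it has small oscillation, or equivalently handle the jump part by subtracting smooth "bump step" functions whose total variation approximates the jump, which requires a small lemma about approximating the sign/step function in variation by smooth functions (this is false in $V(\R)$ norm for the pure step, so one must instead exploit that after multiplying by the compactly supported $a$ and cutting off, the relevant object is really $W^0$ of something we can engineer to be smooth). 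I would present the argument via the $b=b_0+b_1$ split and isolate the BV-approximation as a short preliminary lemma.
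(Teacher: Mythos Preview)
Your reduction to $a\in C_c(\R)$ and $b\in C_c(\R)\cap V(\R)$ is correct and matches the paper. The genuine gap is exactly the step you flag as the ``main obstacle'': the decomposition $b=b_0+b_1$ with $b_0\in C_c^\infty(\R)$ and $V(b_1)$ arbitrarily small is \emph{false} in general. The measure $db$ splits as an absolutely continuous part plus a singular part, and the total variation of the singular part is a fixed positive number whenever it is nonzero. Since any smooth $b_0$ has $db_0$ absolutely continuous, $d(b-b_0)$ retains the full singular part of $db$, so $V(b-b_0)$ is bounded below by the total variation of that singular part. Your remark about ``finitely many jumps'' is misleading (there may be none), and a continuous compactly supported BV function can still carry a Cantor-staircase component that blocks the approximation. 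Mollification does not help: $db*\phi_\eps$ is absolutely continuous, hence $|db-db*\phi_\eps|(\R)\ge |(db)_{\mathrm{sing}}|(\R)$. The ``short preliminary lemma'' you plan to isolate therefore does not exist.

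The paper circumvents this with an algebraic identity you do not mention. After reducing to $\upsilon_n a$ and $\upsilon_n b$, one writes
\[
(\upsilon_n a)\,W^0(\upsilon_n b)=a\,\big[\upsilon_n\,W^0(\upsilon_n)\big]\,W^0(b),
\qquad
W^0(\upsilon_n b)(\upsilon_n a)I=W^0(b)\,\big[W^0(\upsilon_n)\,\upsilon_n\big]\,aI,
\]
using that multiplication operators commute among themselves and Fourier multipliers commute among themselves. Since $aI$ and $W^0(b)$ are bounded and $\cK(X(\R))$ is a two-sided ideal, it suffices to show $a_0W^0(b_0)$ and $W^0(b_0)a_0I$ are compact for $a_0,b_0\in C_c^\infty(\R)$; this replaces your failed BV-approximation by a reduction to \emph{smooth} symbols on both sides. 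There is also a secondary issue you gloss over: for $b_0\in C_c^\infty(\R)$ the kernel $F^{-1}b_0\in\mathcal{S}(\R)$ is \emph{not} compactly supported, so Lemma~\ref{le:compact-convolution} does not apply directly. The paper fixes this with one more approximation: set $k_n:=\upsilon_nF^{-1}b_0\in C_c^\infty(\R)$ and $b_n:=Fk_n$; then $b_n\to b_0$ in $\mathcal{S}(\R)$, hence in $V(\R)$, hence in $\cM_{X(\R)}$ by \eqref{eq:Stechkin}, and each $a_0W^0(b_n)=a_0K_n$ factors as a bounded map $X(\R)\to L^1_{[R]}(\R)$ (Axiom~(A5)), the compact map of Lemma~\ref{le:compact-convolution}, and a bounded map $C_{[R']}(\R)\to X(\R)$ (Axiom~(A2)). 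Finally, your duality suggestion for $W^0(b)aI$ is unsafe in the non-reflexive setting (the adjoint lives on $[X(\R)]^*$, not on $X(\R)$); the paper simply runs the same factorisation argument symmetrically.
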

\begin{proof}
A part of the proof is quite standard (see, e.g., 
\cite[Theorem 5.3.1(i)]{RSS11}).
It follows from 
Lemma~\ref{le:approximating-vanishing-multipliers}{(b)} 
that there 
exists a sequence $\{b_n\}_{n=1}^\infty$ of functions in $C_0(\R)\cap V(\R)$
such that $\|b_n-b\|_{\cM_{X(\R)}}\to 0$ as $n\to\infty$. Then
\[
\|aW^0(b)-aW^0(b_n)\|_{\cB(X(\R))}\to 0,
\quad
\|W^0(b)aI-W^0(b_n)aI\|_{\cB(X(\R))}\to 0
\]
as $n\to\infty$. So, we can assume without loss of generality that 
$b\in C_0(\R)\cap V(\R)$.
Let $\{\upsilon_n\}_{n=1}^\infty$ be the sequence of functions 
in $C_c^\infty(\R)$ as in Lemma~\ref{le:approximating-vanishing}.
{Since}
\begin{align*}
&
\|aW^0(b)-\upsilon_n aW^0(\upsilon_n b)\|_{\cB(X(\R))}
\\
&\quad
\le 
\|(a-\upsilon_n a)W^0(b)\|_{\cB(X(\R))}
+
\|\upsilon_n a W^0(b-\upsilon_n b)\|_{\cB(X(\R))}
\\
&\quad
\le 
\|a-\upsilon_n a\|_{L^\infty(\R)}\|b\|_{\cM_{X(\R)}}
+ 
{\|a\|_{L^\infty(\R)}\|b-\upsilon_n b\|_{\cM_{X(\R)}},}
\end{align*}
{
Lemmas~\ref{le:approximating-vanishing}(a) 
and~\ref{le:approximating-vanishing-multipliers}(a)
} 
imply that
\[
\lim_{n\to\infty}\|aW^0(b)-\upsilon_n aW^0(\upsilon_n b)\|_{\cB(X(\R))}=0.
\]
Analogously we can show that
\[
\lim_{n\to\infty}\|W^0(b)aI-W^0(\upsilon_n b)\upsilon_n a I\|_{\cB(X(\R))}=0.
\]
{
Taking into account that $\upsilon_n\in C_c^\infty(\mathbb{R})$
and
\[
\upsilon_n aW^0(\upsilon_n b)
=
a(\upsilon_n W^0(\upsilon_n))W^0(b),
\
W^0(\upsilon_n b)\upsilon_naI
=
W^0(b)(W^0(\upsilon_n)\upsilon_n)aI,
\]
it is enough to prove that 
$a_0W^0(b_0)$ and $W^0(b_0)a_0I$ are compact operators
for all $a_0,b_0\in C_c^\infty(\mathbb{R})$.
}

Since $F^{-1}b_0 \in \mathcal{S}(\mathbb{R})$, it is easy to see that 
$\upsilon_n F^{-1}b_0 \to F^{-1}b_0$ in $\mathcal{S}(\mathbb{R})$ as
$n \to \infty$. Then $b_n := F\left(\upsilon_n F^{-1}b_0\right) \to b_0$ 
in $\mathcal{S}(\mathbb{R})$ as $n\to\infty$. It is easy to see that the 
convergence in $\mathcal{S}(\R)$ implies the convergence in $V(\R)$.
Therefore $\|b_n - b_0\|_{V(\mathbb{R})} \to 0$ as $n \to \infty$. 
It follows from the Stechkin type inequality \eqref{eq:Stechkin} that
\begin{align*}
\lim_{n\to\infty}
&
\|a_0W^0(b_n)-a_0W^0(b_0)\|_{\cB(X(\R))}
\\
&
\le 
c_X\|a_0\|_{L^\infty(\R)}\lim_{n\to\infty}\|b_n-b_0\|_{V(\R)}
=0,
\\
\lim_{n\to\infty}
&
\|W^0(b_n)a_0I-W^0(b_0)a_0I\|_{\cB(X(\R))}
\\
&
\le 
c_X\|a_0\|_{L^\infty(\R)}\lim_{n\to\infty}\|b_n-b_0\|_{V(\R)}
=0.
\end{align*}
Thus, it is sufficient to prove that
$a_0W^0(b_n), W^0(b_n)a_0I \in \mathcal{K}(X(\mathbb{R}))$ for all $n\in\N$. 
Let 
$k_n:=F^{-1}b_n=\upsilon_n F^{-1}b_0\in C_c^\infty(\R)$.
It follows from the convolution theorem for the inverse Fourier
transform (see, e.g., \cite[Proposition~2.2.11, statement (12)]{G14})
that
for all $n\in\N$ and $f\in C_c^\infty(\R)$,
\begin{align}\label{eq:compactness-products-1}
W^0(b_n)f
&=
F^{-1}(b_nFf)=(F^{-1}b_n)*F^{-1}(Ff)
\nonumber\\
&=
(F^{-1}b_n)*f=k_n*f=:K_nf,
\end{align}
where $K_n$ is the convolution operator with the kernel $k_n$ defined by
\eqref{eq:compact-convolution}.
In view of Lemma~\ref{le:density}, equality \eqref{eq:compactness-products-1}
remains valid for all $f\in X(\R)$. 

Take $R_1,R_2>0$ such that $\operatorname{supp}k_n\subset[-R_1,{R_1}]$
and $\operatorname{supp}a_0\subset[-R_2,R_2]$. Equality 
\eqref{eq:compactness-products-1} implies that
\[
a_0W^0(b_n)=a_0K_n\chi_{[-R_1-R_2,R_1+R_2]}I.
\]
It follows from Axiom (A5) that there exists 
$C_{[-R_1-R_2,R_1+R_2]}\in(0,\infty)$ such that for all $f\in X(\R)$,
\[
\int_{-R_1-R_2}^{R_1+R_2}|f(x)|dx
\le 
C_{[-R_1-R_2,R_1+R_2]} \|f\|_{X(\R)},
\]
which means that the operator $\chi_{[-R_1-R_2,R_1+R_2]}I$ is bounded
from the space $X(\R)$ to the space $L^1_{[R_1+R_2]}(\R)$.
By Lemma~\ref{le:compact-convolution}, the operator $K_n$ is compact
from the space $L^1_{[R_1+R_2]}(\R)$ to the space $C_{[2R_1+R_2]}(\R)$.
It follows from Axiom (A2) that the operator 
$a_0I: C_{[2R_1+R_2]}(\R)\to X(\R)$ is bounded. Thus, for every $n\in\N$,
the operator $a_0W^0(b_n):X(\R)\to X(\R)$ is compact as the composition
of the bounded operator $\chi_{[-R_1-R_2,R_1+R_2]}I:X(\R)\to 
L^1_{[R_1+R_2]}(\R)$, the compact operator 
${K_n}:L_{[R_1+R_2]}^1(\R)\to C_{[2R_1+R_2]}(\R)$, 
and the bounded operator $aI:C_{[2R_1+R_2]}(\R)\to
X(\R)$.

Similarly, for every $n\in\N$, the operator $W^0(b_n) a_0I:X(\R)\to X(\R)$
is compact as the composition of the bounded operator
$a_0I:X(\R)\to L_{[R_2]}^1(\R)$, the compact operator
$K_n:L_{[R_2]}^1(\R)\to C_{[R_1+R_2]}(\R)$, and the bounded
operator $I:C_{[R_1+R_2]}(\R)\to X(\R)$.
\end{proof}
\subsection{Compactness of commutators of Fourier convolution
operators and multiplication operators} 
The previous theorem implies the following.
\begin{corollary}\label{co:compactness-commutators}
Let $X(\R)$ be a separable Banach function space such that the Hardy-Littlewood 
maximal operator $M$ is bounded on $X(\R)$ and on its associate 
space $X'(\R)$. If $a \in C(\dR)$ and $b \in C_X(\dR)$, then 
\[
[aI, W^0(b)] := aW^0(b) - W^0(b)aI \in \mathcal{K}(X(\mathbb{R})).
\]
\end{corollary}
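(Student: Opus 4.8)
The plan is to reduce the general case to the vanishing-at-infinity case already handled by Theorem~\ref{th:compactness-products}. Given $a\in C(\dR)$ and $b\in C_X(\dR)$, set $\alpha:=a(\infty)$ and $\beta:=b(\infty)$, and write $a_0:=a-\alpha$, $b_0:=b-\beta$. Then $a_0\in C(\dR)$, $b_0\in C_X(\dR)$, and both vanish at infinity. The key algebraic observation is that the constant $\beta$ contributes $\beta I$ to $W^0(b)$, so $W^0(b)=W^0(b_0)+\beta I$, while $aI=a_0 I+\alpha I$.

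First I would expand the commutator bilinearly using these decompositions:
\[
[aI,W^0(b)]
=[a_0 I+\alpha I,\,W^0(b_0)+\beta I].
\]
Since $\alpha I$ and $\beta I$ are scalar multiples of the identity, they commute with everything, so all cross-terms and the $[\alpha I,\beta I]$ term vanish, leaving
\[
[aI,W^0(b)]=[a_0 I,\,W^0(b_0)]=a_0 W^0(b_0)-W^0(b_0)a_0 I.
\]
Now $a_0\in C(\dR)$ and $b_0\in C_X(\dR)$ satisfy $a_0(\infty)=b_0(\infty)=0$, so Theorem~\ref{th:compactness-products} applies directly and gives $a_0 W^0(b_0)\in\mathcal{K}(X(\R))$ and $W^0(b_0)a_0 I\in\mathcal{K}(X(\R))$. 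Since $\mathcal{K}(X(\R))$ is a linear subspace of $\mathcal{B}(X(\R))$, the difference is compact, which is the desired conclusion.

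There is essentially no obstacle here: the only thing to be a little careful about is that $W^0$ is additive, i.e. $W^0(b_0+\beta)=W^0(b_0)+W^0(\beta)=W^0(b_0)+\beta I$, which is immediate from the definition $W^0(b)=F^{-1}bF$ and the fact that the Fourier transform intertwines multiplication by the constant $\beta$ with $\beta I$; one also needs that $b_0$ is still in $C_X(\dR)$, which holds because $C_X(\dR)$ is an algebra containing the constants (it is the closure in $\cM_{X(\R)}$ of $C(\dR)\cap V(\R)$, which contains constants). With these trivialities noted, the corollary follows in a couple of lines from Theorem~\ref{th:compactness-products}.
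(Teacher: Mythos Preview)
Your proof is correct and follows essentially the same approach as the paper: decompose $a$ and $b$ into their values at infinity plus vanishing-at-infinity parts, observe that the constant pieces commute with everything so the commutator reduces to $[a_0 I, W^0(b_0)]$, and then apply Theorem~\ref{th:compactness-products}. The paper's argument is slightly more terse, but the substance is identical.
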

\begin{proof}
Since $a = a(\infty) + \widetilde{a}$ and $b = b(\infty) + \widetilde{b}$, 
where $\widetilde{a} \in C(\dR)$, $\widetilde{b} \in C_X(\dR)$, and 
$\widetilde{a}(\infty) = 0 = \widetilde{b}(\infty)$,
Theorem~\ref{th:compactness-products} implies that
\begin{align*}
[aI, W^0(b)] 
&= 
(a(\infty) + \widetilde{a}) (b(\infty) + W^0(\widetilde{b})) 
- 
(b(\infty) + W^0(\widetilde{b}))(a(\infty) + \widetilde{a}) I 
\\
& = 
\widetilde{a} W^0(\widetilde{b}) - W^0(\widetilde{b}) \widetilde{a}I 
\in \mathcal{K}(X(\mathbb{R})).
\qedhere
\end{align*}
\end{proof}
\subsection{Proof of Theorem~\ref{th:algebra-Api}}
Since a Banach function space $X(\R)$ is reflexive if and only if the space 
$X(\R)$ and its associate space $X'(\R)$ are separable 
(see 
{
\cite[Chap.~1, \S2, Theorem 4 and \S3, Corollary 1 to Theorem 7]{L55} or} 
\cite[Chap.~1, Corollaries 4.4 and 5.6]{BS88}), 
Theorem~\ref{th:algebra-Api} follows from Theorem~\ref{th:FKK} and
Corollary~\ref{co:compactness-commutators}.
\qed
\section{Proof of the main result}
\label{sec:main-proof}
\subsection{Estimate for the norm of a product of multiplication operators and
a Fourier convolution operator}
For $n\in\N_0:=\N\cup\{0\}$, let
\[
\ell_n(x) := \frac{\log^n (1 + |x|)}{1 + |x|}, \quad x \in \mathbb{R}.
\]
\begin{lemma}\label{le:ell-n}
If $Y(\R)$ is a Banach function space such that the Hardy-Littlewood
maximal operator $M$ is bounded on it, then $\ell_n \in Y(\mathbb{R})$
for all $n\in\N_0$.
\end{lemma}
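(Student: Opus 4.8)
The plan is to show that $M$ bounded on $Y(\R)$ forces a logarithmic quantitative gain on the decay rate of $\|\chi_{(k,k+1)}\|_{Y(\R)}$ in $k$, which will be exactly enough to sum the dyadic pieces of $\ell_n$. First I would recall that the boundedness of $M$ on $Y(\R)$ implies the boundedness of $M$ on the associate space is \emph{not} needed here, but it is a classical fact (self-improvement / Rubio de Francia type argument, see e.g. the weighted extrapolation machinery, or \cite[Lemma~3.2]{H12}) that $M$ bounded on $Y(\R)$ yields the lower-bound estimate
\[
\frac{1}{|Q|}\,\|\chi_Q\|_{Y(\R)}\,\|\chi_Q\|_{Y'(\R)}\le C
\]
for all intervals $Q$, and more importantly a doubling-type bound: there is a constant $\alpha\in(0,1)$ and $C>0$ with $\|\chi_{(0,R)}\|_{Y(\R)}\le C R^{\alpha}\,\|\chi_{(0,1)}\|_{Y(\R)}$ for $R\ge 1$ — in other words $\|\chi_{(0,R)}\|_{Y(\R)}$ grows strictly slower than $R$. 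Equivalently, writing $w(R):=\|\chi_{(0,R)}\|_{Y(\R)}$, one has $w(2R)\le 2^{\alpha}w(R)$; this is the quantitative content one extracts by testing the boundedness of $M$ on the function $\chi_{(0,R)}$, whose maximal function is $\gtrsim \frac{R}{R+|x|}$ on $\R$.

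Next I would decompose $\R$ dyadically: $\ell_n$ is a bounded function supported essentially (up to the bounded central piece, which lies in $Y(\R)$ by Axiom (A4)) on $|x|\ge 1$, and on the annulus $2^{j}\le|x|<2^{j+1}$ we have $|\ell_n(x)|\le C\,j^{n}\,2^{-j}$. Hence by the lattice property (A2) and the triangle inequality,
\[
\|\ell_n\|_{Y(\R)}\le \|\ell_n\chi_{[-1,1]}\|_{Y(\R)}+C\sum_{j=0}^{\infty} j^{n}\,2^{-j}\,\big\|\chi_{\{2^{j}\le|x|<2^{j+1}\}}\big\|_{Y(\R)}.
\]
By (A4) the first term is finite, and $\big\|\chi_{\{2^{j}\le|x|<2^{j+1}\}}\big\|_{Y(\R)}\le 2\,w(2^{j+1})\le C\,2^{\alpha(j+1)}w(1)$ by the dyadic growth bound above. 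Therefore the series is dominated by $C\,w(1)\sum_{j\ge 0} j^{n}\,2^{-(1-\alpha)j}$, which converges because $1-\alpha>0$ and polynomial factors are absorbed by the geometric decay. This gives $\ell_n\in Y(\R)$, and since nothing in the argument depends on $n$ beyond the convergence of $\sum j^{n} 2^{-(1-\alpha)j}$, it handles all $n\in\N_0$ simultaneously.

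The main obstacle I anticipate is establishing the sublinear growth $w(R)\lesssim R^{\alpha}w(1)$ cleanly from "$M$ bounded on $Y(\R)$" — one must be careful that Banach function space axioms as stated (with (A4), (A5) for \emph{bounded} sets) do give $\|\chi_Q\|_{Y(\R)}<\infty$ for every bounded interval, and then pin down the exponent. The cleanest route is: from $M\chi_{(0,R)}(x)\ge \tfrac{R}{2(R+|x|)}$ and $\|M\chi_{(0,R)}\|_{Y(\R)}\le \|M\|\,\|\chi_{(0,R)}\|_{Y(\R)}$, one gets $\big\|\tfrac{R}{R+|\cdot|}\big\|_{Y(\R)}\le 2\|M\|\,w(R)$; comparing the left side at scales $R$ and $2R$ via the lattice property yields $w(2R)\le \theta\, w(R)\cdot(\text{something})$ with an effective contraction, and iterating gives the power bound with $\alpha=\log_2(\text{constant})<1$. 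Alternatively one may simply cite the known fact that $M$ bounded on $Y(\R)$ implies $Y(\R)$ has an upper Boyd-type index strictly less than $1$ when $Y$ is rearrangement invariant, but since the paper works with general (not r.i.) spaces, I would prefer the direct testing argument above, which only uses (A1)–(A5) and the definition of $M$. Once that estimate is in hand, the dyadic summation is routine.
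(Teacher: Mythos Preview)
Your approach has a genuine gap precisely at the step you flag as the main obstacle: the sublinear growth bound $w(R)\lesssim R^\alpha w(1)$ with $\alpha<1$. The testing argument you sketch does not yield a contraction. From $M\chi_{(0,R)}(x)\ge \tfrac{R}{2(R+|x|)}$ together with the pointwise bound $\tfrac{R}{R+|x|}\ge \tfrac13$ on $|x|\le 2R$, one extracts only
\[
w(2R)\ \le\ 6\,\|M\|_{\cB(Y(\R))}\,w(R);
\]
since $\|M\|_{\cB(Y(\R))}\ge 1$, the doubling constant is at least $6$, which corresponds to an exponent $\log_2\big(6\|M\|\big)>1$, and iteration cannot improve this. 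Your proposed ``comparison at scales $R$ and $2R$ via the lattice property'' does not help either: pointwise $\tfrac{2R}{2R+|x|}\ge\tfrac{R}{R+|x|}$, so the inequality between their $Y$-norms goes the wrong way for bounding $w(2R)$ from above. A single application of $M$ to a characteristic function simply does not carry enough decay to force $\alpha<1$; the logarithmic gain you need appears only after a \emph{second} application of $M$, and turning that into a clean power bound would still require additional work.

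The paper bypasses all of this with a two-line induction that uses exactly the one-step logarithmic gain of $M$. The base case is the pointwise inequality $\ell_0\le M\chi_{[-1,1]}$, which places $\ell_0$ in $Y(\R)$ by Axioms~(A4) and~(A2). For the inductive step one computes, for $x\ne 0$,
\[
(M\ell_k)(x)\ \ge\ \frac{1}{|x|}\int_0^{|x|}\frac{\log^k(1+t)}{1+t}\,dt
\ =\ \frac{\log^{k+1}(1+|x|)}{(k+1)\,|x|}
\ \ge\ \frac{1}{k+1}\,\ell_{k+1}(x),
\]
so $\ell_{k+1}\le (k+1)M\ell_k\in Y(\R)$. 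No growth estimate on $\|\chi_{(0,R)}\|_{Y(\R)}$ is needed at all. In effect, the iterated-$M$ mechanism you would have to invoke to repair your sublinear-growth claim \emph{is} the paper's argument, applied directly to the functions $\ell_k$ rather than detoured through characteristic functions and a dyadic decomposition.
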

\begin{proof}
Since $\chi_{[-1, 1]} \in Y(\mathbb{R})$ by Axiom (A4), 
$M\chi_{[-1, 1]} \in Y(\mathbb{R})$. It is easy to see
that $0 \le \ell_0 \le M\chi_{[-1, 1]}$ (see \cite[Example 2.1.4]{G14}). 
Hence $\ell_0 \in Y(\mathbb{R})$ in view of Axiom (A2).

Now let $k\in\N_0$. It follows from the definition of the 
Hardy-Littlewood maximal operator that for $x\ne 0$,
\[
(M\ell_k)(x)
\ge 
\left\{\begin{array}{lll}
\displaystyle
\frac{1}{x+\eps}\int_0^{x+\eps}\frac{\log^k(1+|t|)}{1+|t|}dt
&\mbox{if}& x,\eps>0,
\\[4mm]
\displaystyle
\frac{1}{-x-\eps}\int_{x+\eps}^0\frac{\log^k(1+|t|)}{1+|t|}dt
&\mbox{if}& x,\eps<0.
\end{array}\right.
\]
Passing to the limit as $\eps\to 0^\pm$, we obtain for $x\ne 0$,
\begin{align*}
(M\ell_k)(x)
&\ge 
\left\{\begin{array}{lll}
\displaystyle
\frac{1}{x}\int_0^{x}\frac{\log^k(1+|t|)}{1+|t|}dt
&\mbox{if}& x>0,
\\[4mm]
\displaystyle
\frac{1}{-x}\int_{x}^0\frac{\log^k(1+|t|)}{1+|t|}dt
&\mbox{if}& x<0
\end{array}\right.
=
\frac{1}{|x|}\int_0^{|x|}\frac{\log^k(1+t)}{1+t}dt
\\
&=
\frac{1}{(k+1)|x|}\log^{k+1}(1+|x|)
\ge
\frac{1}{k+1}\ell_{k+1}(x).
\end{align*}
So
\[
0\le\ell_{k+1}\le(k+1)M\ell_k,\quad k\in\N_0,
\]
and one gets by induction that $\ell_n\in Y(\R)$ for all $n\in\N_0$.
\end{proof}
For $R>0$, let $\chi_{\{R\}} := \chi_{\mathbb{R}\setminus[-R, R]}$.
\begin{theorem}\label{th:key-estimate}
Let $X(\mathbb{R})$ be a separable Banach function space such 
that the Hardy-Littlewood maximal operator is bounded on $X(\R)$ 
and on its associate space $X'(\mathbb{R})$. 
Let $a \in C_c(\R)$ and $b \in C_c(\mathbb{R})\cap V(\mathbb{R})$. 
Then for every $n \in \mathbb{N}_0$, there exists a constant 
$c_n(a,b)\in (0,\infty)$ depending only on $a,b$ and $n$,
such that for all $R>0$,
\begin{equation}\label{eq:key-estimate-0}
\|aW^0(b)\chi_{\{R\}}I\|_{\cB(X(\R))} \le \frac{c_n(a,b)}{\log^n(R + 2)}.
\end{equation}
\end{theorem}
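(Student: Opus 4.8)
The plan is to reduce the bound on $\|aW^0(b)\chi_{\{R\}}I\|_{\cB(X(\R))}$ to a pointwise decay estimate for the convolution kernel $k := F^{-1}b$ away from the origin, and then to dominate the resulting integral operator by a multiple of the Hardy--Littlewood maximal operator, using the hypothesis that $M$ is bounded on $X(\R)$. Since $a,b\in C_c(\R)$ and $b\in V(\R)$, the function $k=F^{-1}b$ is well defined (after suitable interpretation, $b\in L^1\cap L^\infty$ or one argues via $b$ of bounded variation so that $k$ is continuous and decays); the key analytic input is that a compactly supported function of bounded variation has Fourier transform decaying like $|x|^{-1}$, and more precisely, integrating by parts against $db$, one gets $|k(x)| = |(F^{-1}b)(x)| \le C(b)/(1+|x|)$, and iterating (or using that $\log$-weighted versions of $k$ are still controlled) yields, for each $n\in\N_0$, a bound of the form $|k(x)| \le C_n(b)\,\ell_n(x)/\log^n(|x|+2)$ for $|x|$ large --- equivalently $|k(x)|\log^n(|x|+2) \le C_n(b)\,\ell_n(x)$ globally after absorbing the behaviour near $0$. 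This is the step I expect to be the main obstacle: squeezing the extra logarithmic factor $\log^n(R+2)$ out of the kernel estimate in a way that is uniform in $R$, since $b$ need not be smooth and one only has bounded variation to work with; the functions $\ell_n$ from Lemma~\ref{le:ell-n} are designed precisely to capture this.

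First I would fix $R>0$ and write $\operatorname{supp}a\subset[-R_0,R_0]$. For $f\in X(\R)$ supported in $\R\setminus[-R,R]$ and $x\in[-R_0,R_0]$, we have
\[
(aW^0(b)\chi_{\{R\}}f)(x) = a(x)\int_{|y|>R} k(x-y)f(y)\,dy,
\]
and for $|x|\le R_0 < R \le |y|$ one has $|x-y| \ge |y| - R_0 \ge |y|/2$ once $R\ge 2R_0$ (the remaining finitely many $R$ are handled by the trivial bound $\|aW^0(b)\|_{\cB(X(\R))}\le \|a\|_{L^\infty}\|b\|_{\cM_{X(\R)}}$ times a constant, absorbed into $c_n(a,b)$). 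Hence $|k(x-y)| \le C_n(b)\,\ell_n(y/2)/\log^n(|y|/2+2) \le C_n'(b)\,\ell_{n}(y)/\log^n(R+2)$ using that $|y|>R$ and monotonicity of $t\mapsto \log^n(t+2)$. This gives the pointwise majorant
\[
|(aW^0(b)\chi_{\{R\}}f)(x)| \le \frac{C_n'(b)\,\|a\|_{L^\infty(\R)}}{\log^n(R+2)}\,\chi_{[-R_0,R_0]}(x)\int_\R \ell_n(y)\,|f(y)|\,dy.
\]

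Then I would estimate the inner integral by $\|\ell_n\|_{X'(\R)}\,\|f\|_{X(\R)}$ via the H\"older inequality for Banach function spaces, where $\ell_n\in X'(\R)$ by Lemma~\ref{le:ell-n} applied to $Y(\R)=X'(\R)$ (here the hypothesis that $M$ is bounded on $X'(\R)$ enters). Taking the $X(\R)$-norm in $x$ and using $\|\chi_{[-R_0,R_0]}\|_{X(\R)}<\infty$ by Axiom~(A4), we obtain
\[
\|aW^0(b)\chi_{\{R\}}f\|_{X(\R)} \le \frac{C_n'(b)\,\|a\|_{L^\infty(\R)}\,\|\ell_n\|_{X'(\R)}\,\|\chi_{[-R_0,R_0]}\|_{X(\R)}}{\log^n(R+2)}\,\|f\|_{X(\R)},
\]
which is the desired estimate with $c_n(a,b)$ the product of the constants appearing (all depending only on $a$, $b$, $n$, and the space $X(\R)$). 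The only place where I must be careful is the passage from the Schwartz-class/$C_c^\infty$ computation of $W^0(b)$ as convolution with $k=F^{-1}b$ to general $b\in C_c(\R)\cap V(\R)$ and general $f\in X(\R)$: this is handled exactly as in the proof of Theorem~\ref{th:compactness-products}, first approximating $b$ in $V(\R)$ by $C_c^\infty$ functions (using that such approximants can be chosen with supports in a fixed compact set) and invoking the Stechkin-type inequality~\eqref{eq:Stechkin}, and then using Lemma~\ref{le:density} to extend the convolution representation from $C_c^\infty(\R)$ to all of $X(\R)$.
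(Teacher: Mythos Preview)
Your approach is the same as the paper's and is essentially correct, but two points deserve cleaning up. First, no ``iteration'' is needed to produce the logarithmic factor, and your global pointwise claim $|k(x)|\log^n(|x|+2)\le C_n(b)\,\ell_n(x)$ is actually false at $x=0$ for $n\ge 1$, since $\ell_n(0)=0$ while $k(0)=\frac{1}{2\pi}\int_\R b$ need not vanish. All that is required is the single integration-by-parts estimate $|k(x)|\le V(b)/(2\pi|x|)$; then for $|x|\le R_0$ and $|y|>R\ge \max\{2R_0,1\}$ one has $|x-y|\gtrsim 1+|y|$, whence
\[
|k(x-y)|\le \frac{C(b)}{1+|y|}=\frac{C(b)\,\ell_n(y)}{\log^n(1+|y|)}\le \frac{C_n(b)\,\ell_n(y)}{\log^n(R+2)}
\]
directly from the definition of $\ell_n$ and the fact that $|y|>R$; this is exactly how the paper argues. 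Second, the approximation of $b$ by $C_c^\infty$ functions at the end is unnecessary: since $b\in C_c(\R)\subset L^1(\R)$, the convolution representation $W^0(b)f=(F^{-1}b)*f$ holds for $f\in C_c^\infty(\R)$ by the ordinary convolution theorem and then extends to all $f\in X(\R)$ by density (Lemma~\ref{le:density}).
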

\begin{proof}
Since $b\in C_c(\R)\subset L^1(\R)$, it follows from the convolution theorem
for the inverse Fourier transform (see, e.g., \cite[Theorem~11.66]{A20})
that for $f\in C_c^\infty(\R)$,
\begin{equation}\label{eq:key-estimate-1}
W^0(b)f=F^{-1}(b\cdot Ff)=(F^{-1}b)*F^{-1}(Ff)=:k*f,
\end{equation}
where $k:=F^{-1}b$. In view of Lemma~\ref{le:density}, formula
\eqref{eq:key-estimate-1} remains valid for all $f\in X(\R)$.
Since $b\in V(\R)$, using integration by parts, similarly to the proof
of \cite[Chap.~I, Theorem~4.5]{K76}, we get for $x\in\R$,
\[
k(x)
=
(F^{-1}b)(x)
=
\frac{1}{2\pi}\int_\R e^{-ix\xi}b(\xi)\,d\xi
=
\frac{1}{2\pi ix}\int_\R e^{-ix\xi}db(\xi),
\]
and hence
\begin{equation}\label{eq:key-estimate-2}
|k(x)|\le \frac{V(b)}{2\pi|x|},
\quad
x\in\R.
\end{equation}
Take $R_1>0$ such that $\operatorname{supp}a\subset[-R_1,R_1]$. If
$x\in[-R_1,R_1]$ and $|y|>R\ge\max\{2R_1,1\}$, then
\begin{equation}\label{eq:key-estimate-3}
|x-y|\ge|y|-|x|\ge|y|-R_1\ge|y|-\frac{|y|}{2}=\frac{|y|}{2}\ge\frac{|y|+1}{4}
\end{equation}
and
\begin{equation}\label{eq:key-estimate-4}
\log(R+1)\ge\frac{1}{2}\log(R+2).
\end{equation}
Combining \eqref{eq:key-estimate-2}--\eqref{eq:key-estimate-4} and
taking into account the definition of $\ell_n$, we get for every
$x\in[-R_1,R_1]$, $R\ge\max\{2R_1,1\}$, and $n\in\N_0$,
\begin{align}
|k(x-y)|\chi_{\{R\}}(y)
&\le 
\frac{V(b)}{2\pi|x-y|}\chi_{\{R\}}(y)
\le 
\frac{2V(b)}{\pi(1+|y|)}\chi_{\{R\}}(y)
\nonumber\\
&\le 
\frac{2V(b)}{\pi\log^n(R+1)}\ell_n(y)
\le 
\frac{2^{n+1}V(b)}{\pi\log^n(R+2)}\ell_n(y).
\label{eq:key-estimate-5}
\end{align}
It follows from \eqref{eq:key-estimate-5}, Lemma~\ref{le:ell-n} and
H\"older's inequality for Banach function spaces 
(see {\cite[Chap.~1, \S1, Lemma 2]{L55} or}
\cite[Chap.~1, Theorem~2.4]{BS88}) that for $x\in[-R_1,R_1]$, 
$R\ge\max\{2R_1,1\}$, $n\in\N_0$ and $f\in X(\R)$,
\begin{align}
|k*(\chi_{\{R\}}f)(x)|
&=
\left|\int_\R k(x-y)\chi_{\{R\}}(y)f(y)\,dy\right|
\nonumber\\
&\le 
\|k(x-\cdot)\chi_{\{R\}}\|_{X'(\R)}\|f\|_{X(\R)}
\nonumber\\
&\le 
\frac{2^{n+1}V(b)\|\ell_n\|_{X'(\R)}\|f\|_{X(\R)}}{\pi\log^n(R+2)}.
\label{eq:key-estimate-6}
\end{align}
It follows from Axiom (A4) that $\chi_{[-R_1,R_1]}\in X(\R)$.
Since $\operatorname{supp}a\subset[-R_1,R_1]$, in view of Axiom (A2), 
equality \eqref{eq:key-estimate-1} and inequality \eqref{eq:key-estimate-6},
we obtain for $R\ge\max\{2R_1,1\}$, $f\in X(\R)$ and $n\in\N_0$,
\begin{align}
\|aW^0(b)\chi_{\{R\}}f\|_{X(\R)}
&\le 
\|a\|_{L^\infty(\R)}\|\chi_{[-R_1,R_1]}\|_{X(\R)}
\operatornamewithlimits{ess\,sup}_{x\in[-R_1,R_1]}|k*(\chi_{\{R\}}f)(x)|
\nonumber\\
&\le 
\frac{2^{n+1}{\|a\|_{L^\infty(\R)}}
V(b)\|\ell_n\|_{X'(\R)}\|\chi_{[-R_1,R_1]}\|_{X(\R)}}
{\pi\log^n(R+2)}
\|f\|_{X(\R)}.
\label{eq:key-estimate-7}
\end{align}
If $R\in(0,\max\{2R_1,1\})$, then
$\log(R+2)\le\log(2+\max\{2R_1,1\})$ and
\begin{align}
\|aW^0(b)\chi_{\{R\}}I\|_{\cB(X(\R))}
&\le 
\|aW^0(b)\|_{\cB(X(\R))}
\nonumber\\
&\le 
\frac{\log^n(2+\max\{2R_1,1\})\|aW^0(b)\|_{\cB(X(\R))}}{\log^n(R+2)}.
\label{eq:key-estimate-8}
\end{align}
It follows from \eqref{eq:key-estimate-7} and \eqref{eq:key-estimate-8}
that \eqref{eq:key-estimate-0} is fulfilled with
\begin{align*}
c_n(a,b):=\max\bigg\{ &
\frac{2^{n+1}}{\pi}{\|a\|_{L^\infty(\R)}}
V(b)\|\ell_n\|_{X'(\R)}\|\chi_{[-R_1,R_1]}\|_{X(\R)},
\\
&
\log^n(2+\max\{2R_1,1\})\|aW^0(b)\|_{\cB(X(\R))}\bigg\},
\end{align*}
which completes the proof.
\end{proof}
\subsection{Sufficient condition on the space \boldmath{$X(\R)$} implying that the \\
algebra \boldmath{$\cA_{X(\R)}$} does not contain all rank one operators}
Now we prove a conditional statement, which will lead 
to the proof of Theorem~\ref{th:main}.
\begin{theorem}\label{th:main-key-step}
Let $X(\mathbb{R})$ be a separable non-reflexive Banach function space such 
that the Hardy-Littlewood maximal operator is bounded on $X(\R)$ 
and on its associate space $X'(\mathbb{R})$. Suppose that there exist a 
function $g \in X'(\mathbb{R})$ and a constant $\delta > 0$ such that 
$\|\chi_{\{R\}}g\|_{X'(\mathbb{R})} \ge \delta$ for all $R > 0$. Then for 
any function $h \in X(\mathbb{R})\setminus\{0\}$, the rank one operator 
$T_{g,h} \in \mathcal{B}(X(\mathbb{R}))$, defined by
\[
(T_{g,h}f)(x) := h(x) \int_\R g(y) f(y)\, dy ,
\]
does not belong to the algebra $\mathcal{A}_{X(\mathbb{R})}$.
\end{theorem}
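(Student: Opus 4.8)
The plan is to argue by contradiction. Suppose $T_{g,h} \in \cA_{X(\R)}$. The algebra $\cA_{X(\R)}$ is, by definition, the norm closure in $\cB(X(\R))$ of the (non-closed) algebra generated by the operators $aI$ with $a\in C(\dR)$ and $W^0(b)$ with $b\in C_X(\dR)$. Using the density of $C(\dR)\cap V(\R)$ in $C_X(\dR)$ (the very definition of $C_X(\dR)$) together with the Stechkin-type estimate \eqref{eq:Stechkin}, one first reduces to approximating $T_{g,h}$ in operator norm by finite sums of finite products of operators $aI$ and $W^0(b)$ with $a \in C(\dR)$ and $b \in C(\dR)\cap V(\R)$. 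The next reduction is to split off the values at infinity: writing each such $a$ and $b$ as a constant plus a function in $C_0(\R)$ (resp.\ $C_0(\R)\cap V(\R)$), and then, using Lemma~\ref{le:approximating-vanishing} and Lemma~\ref{le:approximating-vanishing-multipliers}(a), replacing the $C_0$-parts by compactly supported ones, one obtains: for every $\eps > 0$ there is an operator $A_\eps$ which is a finite linear combination of finite products of operators of the form $a_jI$ with $a_j \in C_c(\R)$ and $W^0(b_j)$ with $b_j \in C_c(\R)\cap V(\R)$, plus a ``constant-symbol remainder'' $P_\eps$ lying in the Banach algebra generated by constants and constant-symbol convolutions (which is just $\C I$), such that $\|T_{g,h} - A_\eps - P_\eps\|_{\cB(X(\R))} < \eps$.

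Next I would test these operators against the tails $\chi_{\{R\}}$. The key point is that $T_{g,h}\chi_{\{R\}}I$ has norm $\|h\|_{X(\R)}\,\|\chi_{\{R\}}g\|_{X'(\R)} \ge \delta\|h\|_{X(\R)} > 0$ for all $R>0$, i.e.\ it is bounded away from zero uniformly in $R$. On the other hand, I claim that for any operator $A$ which is a finite sum of finite products of operators $a_jI$ ($a_j\in C_c(\R)$) and $W^0(b_j)$ ($b_j\in C_c(\R)\cap V(\R)$) in which at least one factor $W^0(b_j)$ is preceded (reading left to right, i.e.\ in the product order) by a factor $a_jI$, one has $\|A\chi_{\{R\}}I\|_{\cB(X(\R))}\to 0$ as $R\to\infty$. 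This is where Theorem~\ref{th:key-estimate} does the work: a single product of the form $aW^0(b)\chi_{\{R\}}I$ with $a\in C_c(\R)$, $b\in C_c(\R)\cap V(\R)$ has norm $O(\log^{-n}(R+2))$ for every $n$; by sliding the $\chi_{\{R\}}$ leftward through a product and inserting a compactly supported $aI$ factor in front of the relevant $W^0(b)$ (absorbing the neighbouring compactly supported multiplication operators and using boundedness of all the other factors), any product containing such a pattern has tail norm tending to zero. The only products that do \emph{not} decay in this way are those in which no $W^0$ appears after a multiplication operator — equivalently, after normalizing, those whose ``essential part'' is just a single multiplication operator $cI$ with $c\in C_c(\R)$ plus convolution operators collected to the right. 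Since $\chi_{\{R\}}\to 0$ pointwise and the multiplication part is compactly supported, the multiplication-only contributions also have $\|\cdot\,\chi_{\{R\}}I\|\to 0$. Hence the whole ``$A_\eps$ part'' satisfies $\|A_\eps\chi_{\{R\}}I\|_{\cB(X(\R))}\to 0$ as $R\to\infty$, while the constant-symbol remainder $P_\eps = \gamma_\eps I$ contributes $\|\gamma_\eps\chi_{\{R\}}I\| = |\gamma_\eps|\,$ which need not decay — so I must also control $\gamma_\eps$.

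To pin down $\gamma_\eps$: applying $T_{g,h}-A_\eps-\gamma_\eps I$ to a fixed compactly supported test function and letting the support of the argument run off to infinity (or, more cleanly, pairing with $\chi_{\{R\}}$ and using that $T_{g,h}\chi_{\{R\}}I$ stays bounded below while $A_\eps\chi_{\{R\}}I\to 0$), one forces $|\gamma_\eps|\le \eps + o(1)$ as $R\to\infty$; more carefully, comparing $\|(T_{g,h}-A_\eps-\gamma_\eps I)\chi_{\{R\}}I\|<\eps$ with the triangle inequality gives $|\gamma_\eps| = \|\gamma_\eps\chi_{\{R\}}I\| \le \eps + \|A_\eps\chi_{\{R\}}I\| + \|T_{g,h}\chi_{\{R\}}I\|$, which is not yet a contradiction, so instead I pair against $\chi_{[-R,R]}$: since $T_{g,h}$ and $A_\eps$ both have compactly supported "reach" in a suitable sense, one isolates $\gamma_\eps$ and finds $|\gamma_\eps|$ small. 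Granting $|\gamma_\eps|\le C\eps$, we reach the contradiction: for $R$ large,
\[
\delta\|h\|_{X(\R)} \le \|T_{g,h}\chi_{\{R\}}I\|_{\cB(X(\R))}
\le \eps + \|A_\eps\chi_{\{R\}}I\|_{\cB(X(\R))} + |\gamma_\eps|
\le \eps + o(1) + C\eps,
\]
and letting $R\to\infty$ then $\eps\to 0$ forces $\|h\|_{X(\R)} = 0$, contradicting $h\ne 0$. Therefore $T_{g,h}\notin\cA_{X(\R)}$.

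The main obstacle, I expect, is the bookkeeping in the reduction step: making rigorous the claim that a general element of the dense subalgebra, after the $C_0$-to-$C_c$ approximations, decomposes into (i) a scalar multiple of the identity plus (ii) a finite sum of products each of which contains a multiplication-then-convolution pattern amenable to Theorem~\ref{th:key-estimate}, and then commuting the tail cutoff $\chi_{\{R\}}$ leftward while keeping all constants under control uniformly in $R$. In particular one must handle the fact that $W^0(b)$ and $\chi_{\{R\}}I$ do not commute exactly — but the commutator $[W^0(b),\chi_{\{R\}}I]$, for $b\in C_c(\R)\cap V(\R)$, can itself be estimated by a convolution-kernel argument exactly as in the proof of Theorem~\ref{th:key-estimate}, since the relevant kernel again decays like $1/|x|$; this is the technical heart and should be isolated as an auxiliary lemma before the main contradiction argument.
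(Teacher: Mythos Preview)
Your overall architecture is right—contradiction, reduction to products of operators with compactly supported data, and Theorem~\ref{th:key-estimate} to kill the tails—but there is a genuine gap in the decomposition step. After expanding products of $(c+\tilde a)I$ and $cI+W^0(\tilde b)$ you get, besides the scalar $\gamma_\eps I$ and the terms containing an $aW^0(b)$ pattern, a \emph{pure convolution} contribution $W^0(\tilde b_0)$ (coming from products consisting solely of $W^0$ factors). You have not accounted for it: it is not a scalar, and $\|W^0(\tilde b_0)\chi_{\{R\}}I\|$ has no reason to decay (on a translation-invariant space such as $L^{p,1}(\R)$ it equals $\|W^0(\tilde b_0)\|$ for every $R$). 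Your separate attempt to show $|\gamma_\eps|$ is small by ``pairing against $\chi_{[-R,R]}$'' is not an argument as it stands, and would in any case have to absorb this convolution remainder as well.

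The paper closes both holes with one device you are missing: instead of testing only with $\chi_{\{R\}}I$ on the right, it tests with $\chi_{R_1}(\cdot)\chi_{\{R\}}I$, where $\chi_{R_1}=\chi_{[-R_1,R_1]}$ and $R_1$ is fixed so that $\|\chi_{R_1}h\|_{X(\R)}\ge\tfrac12\|h\|_{X(\R)}$ (possible by separability of $X(\R)$). Then (i) $\chi_{R_1}\,cI\,\chi_{\{R\}}=0$ for $R>R_1$, so the scalar term vanishes outright and no estimate on $\gamma_\eps$ is needed; (ii) $\chi_{R_1}W^0(\tilde b_0)=\chi_{R_1}\tilde a_0W^0(\tilde b_0)$ for any $\tilde a_0\in C_c(\R)$ equal to $1$ on $[-R_1,R_1]$, which converts the pure convolution into an $aW^0(b)$ block to which Theorem~\ref{th:key-estimate} applies; (iii) the lower bound survives because $\|\chi_{R_1}T_{g,h}\chi_{\{R\}}I\|=\|\chi_{R_1}h\|_{X(\R)}\,\|\chi_{\{R\}}g\|_{X'(\R)}\ge\tfrac{\delta}{2}\|h\|_{X(\R)}$. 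With the left cutoff in place, the remaining terms can be arranged (as in the paper's formula~\eqref{eq:main-key-step-8}) so that each $\tilde aW^0(\tilde b)$ block already sits at the \emph{right end} of its product, so $\chi_{\{R\}}$ is adjacent to it and no commutator estimate for $[W^0(b),\chi_{\{R\}}I]$ is required.
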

\begin{proof}
Fix $h\in X(\R)\setminus\{0\}$. Suppose the contrary: $T_{g,h}\in\cA_{X(\R)}$.
Fix $\eps>0$. By the definition of the algebra $\cA_{X(\R)}$ there exist
numbers $N,M\in\N$ and operators 
\[
A_{ij}\in\{aI,W^0(b)\ : \ a\in C(\dR),\ b\in C_X(\dR)\}
\]
for $i\in\{1,\dots,N\}$, $j\in\{1,\dots,M\}$ such that
\begin{equation}\label{eq:main-key-step-1}
\left\|
T_{g,h}-\sum_{i=1}^N A_{i1}\dots A_{iM}
\right\|_{\cB(X(\R))}<\frac{\eps}{6}.
\end{equation}
Put
\begin{equation}\label{eq:main-key-step-2}
L:=2\max\big\{
\|A_{ij}\|_{\cB(X(\R))}
\ :\
i\in\{1,\dots,{N}\},\
j\in\{1,\dots,M\}{\big\} .}
\end{equation}
Let $b_1,\dots,b_r\in C_X(\dR)$ be such that for $k\in\{1,\dots,r\}$,
\[
W^0(b_k)\in
\big\{A_{ij} \ :\
i\in\{1,\dots,{N}\},\
j\in\{1,\dots,M\}\big\} 
\setminus
\big\{aI\ : \ a\in C(\dR)\big\}
\]
and $a_1,\dots, a_s\in C(\dR)$ be such that for $l\in\{1,\dots,s\}$,
\[
a_lI
\in
\big\{A_{ij}  :
i\in\{1,\dots,{N}\},\
j\in\{1,\dots,M\}\big\} 
\setminus
\big\{W^0(b_k) :  k\in\{1,\dots,r\}\big\}.
\]
It follows from the definition of the algebra $C_X(\dR)$ that for each
$k\in\{1,\dots,r\}$ there exists a function $c_k\in C(\dR)\cap V(\R)$
such that
\begin{align}
\|W^0(b_k)-W^0(c_k)\|_{\cB(X(\R))}
&=
\|b_k-c_k\|_{\cM_{X(\R)}}
\nonumber\\
&<
\min\left\{\frac{\eps}{6NML^{M-1}},\frac{L}{4}\right\}.
\label{eq:main-key-step-3}
\end{align}
Further, in view of 
{Lemma~\ref{le:approximating-vanishing-multipliers}(a),}
there exists
a function $\widetilde{b}_k\in C_c(\R)\cap V(\R)$ such that
\begin{align}
\|W^0(c_k)-c_k(\infty)I-W^0(\widetilde{b}_k)\|_{\cB(X(\R))}
&=
\|c_k-c_k(\infty)-\widetilde{b}_k\|_{\cM_{{X(\R)}}}
\nonumber\\
&
{
<\min\left\{\frac{\eps}{6NML^{M-1}},\frac{L}{4}\right\}.
}
\label{eq:main-key-step-4}
\end{align}
Combining \eqref{eq:main-key-step-3} and \eqref{eq:main-key-step-4},
we get
\[
\|W^0(b_k)-c_k(\infty)I-W^0(\widetilde{b}_k)\|_{\cB(X(\R))}
<
\min\left\{\frac{\eps}{3NML^{M-1}},\frac{L}{2}\right\}.
\]
Analogously, by Lemma~\ref{le:approximating-vanishing}(a), for every
$l\in\{1,\dots,s\}$, there exists $\widetilde{a}_l\in C_c(\R)$
such that
\begin{align*}
\|a_lI-a_l(\infty)I-\widetilde{a}_lI\|_{\cB(X(\R))}
&\le 
\|a_l-a_l(\infty)-\widetilde{a}_l\|_{L^\infty(\R)}
\\
&<
\min\left\{\frac{\eps}{3NML^{M-1}},\frac{L}{2}\right\}.
\end{align*}
We have shown that for every $i\in\{1,\dots,N\}$ and $j\in\{1,\dots,{M}\}$
there exists an operator
\begin{equation}\label{eq:main-key-step-5}
B_{ij}\in\big\{cI+\widetilde{a}I,cI+W^0(\widetilde{b})\ :\
c\in\C,\ \widetilde{a}\in C_c(\R),\ \widetilde{b}\in C_c(\R)\cap V(\R)\big\}
\end{equation}
such that
\[
\|A_{ij}-B_{ij}\|_{\cB(X(\R))}
<
\min\left\{\frac{\eps}{3NML^{M-1}},\frac{L}{2}\right\}.
\]
Then, taking into account \eqref{eq:main-key-step-2}, we get
\begin{align*}
&
\left\|
\sum_{i=1}^N A_{i1}\dots A_{iM}
-
\sum_{i=1}^N B_{i1}\dots B_{iM}
\right\|_{\cB(X(\R))}
\\
&\quad=
\left\|
\sum_{i=1}^N\sum_{j=1}^M 
A_{i1}\dots A_{i,j-1}
(A_{ij}-B_{ij})B_{i,j+1}\dots B_{iM}
\right\|_{\cB(X(\R))}
\\
&\quad\le
\sum_{i=1}^N\sum_{j=1}^M
\left(\prod_{k=1}^{j-1}\|A_{ik}\|_{\cB(X(\R))}\right)
\|A_{ij}-B_{ij}\|_{\cB(X(\R))} 
\left(\prod_{l=j+1}^{M}\|B_{il}\|_{\cB(X(\R))}\right)
\end{align*}
\begin{align}
<
\sum_{i=1}^N\sum_{j=1}^M
\left(\frac{L}{2}\right)^{j-1}
\frac{\eps}{3NML^{M-1}}
\left(\frac{L}{2}+\frac{L}{2}\right)^{M-j}
<
\sum_{i=1}^N\sum_{j=1}^M
\frac{\eps}{3NM}
=
\frac{\eps}{3}.
\label{eq:main-key-step-6}
\end{align}
It follows from \eqref{eq:main-key-step-1} and
\eqref{eq:main-key-step-6} that
\begin{equation}\label{eq:main-key-step-7}
\|T_{g,h}-T_\eps\|_{\cB(X(\R))}
<
\frac{\eps}{6}+\frac{\eps}{3}
=
\frac{\eps}{2},
\end{equation}
where
\[
T_\eps:=\sum_{i=1}^N B_{i1}\dots B_{iM}.
\]
Taking into account \eqref{eq:main-key-step-5}, we can rearrange
terms and write the operator $T_\eps$ in the form
\begin{equation}\label{eq:main-key-step-8}
T_\eps=cI+W^0(\widetilde{b}_0)
+
\sum_{i=1}^p D_{1,i}\widetilde{a}_{1,i}I
+
\sum_{j=1}^t D_{2,j}\widetilde{a}_{2,j}W^0(\widetilde{b}_j),
\end{equation}
where $c\in\C$, $\widetilde{b}_j\in C_c(\R)\cap V(\R)$
for $j\in\{0,\dots,t\}$, 
$\widetilde{a}_{1,i},\widetilde{a}_{2,j}\in C_c(\R)$
and $D_{1,i},D_{2,j}$ are some operators in
$\cA_{X(\R)}\setminus\{0\}$
for $i\in\{1,\dots,p\}$ and $j\in\{1,\dots,t\}$.

Since the space $X(\R)$ is separable, it follows from 
{
\cite[Chap.~1, \S2, Definition 1 and \S3, Corollary 1 to Theorem 7]{L55} 
(or \cite[Chap.~1, Definition~3.1 and Corollary~5.6]{BS88})} that there
exists $R_1>0$ such that 
$\|\chi_{\{R_1\}}h\|_{X(\R)}\le\frac{1}{2}\|h\|_{X(\R)}$. Then
\begin{equation}\label{eq:main-key-step-9}
\|\chi_{R_1}h\|_{X(\R)}
\ge 
\|h\|_{X(\R)}-\|\chi_{\{R_1\}}h\|_{X(\R)}
\ge 
\frac{1}{2}\|h\|_{X(\R)},
\end{equation}
where 
\[
\chi_{R_1}:=1-\chi_{\{R_1\}}=\chi_{[-R_1,R_1]}.
\]
Since $\widetilde{a}_{1,i}\in C_c(\R)$ for $i=1,\dots,p$, there
exists $R_2>R_1$ such that for $R\ge R_2$,
\begin{equation}\label{eq:main-key-step-10}
\chi_{R_1}(cI)\chi_{\{R\}}I
+
\chi_{R_1}\sum_{i=1}^p D_{1,i}\widetilde{a}_{1,i}\chi_{\{R\}}I=0.
\end{equation}
Let $\widetilde{a}_0\in C_c(\R)$ be such that $\widetilde{a}_0=1$
for $x\in[-R_1,R_1]$. Then
\[
\chi_{R_1}W^0(\widetilde{b}_0)=\chi_{R_1}\widetilde{a}_0 W^0(\widetilde{b}_0).
\]
It follows from Theorem~\ref{th:key-estimate} that there exists 
$R_0>R_2$ such that for all $R\ge R_0$ and $j\in\{1,\dots,t\}$,
\begin{align}
&
\|\chi_{R_1}\widetilde{a}_0W^0(\widetilde{b}_0)\chi_{\{R\}}I\|_{\cB(X(\R))}
\le 
\|\widetilde{a}_0W^0(\widetilde{b}_0)\chi_{\{R\}}I\|_{\cB(X(\R))}
<
\frac{\eps}{2(t+1)},
\label{eq:main-key-step-11}
\\
&
\|\widetilde{a}_{2,j}W^0(\widetilde{b}_j)\chi_{\{R\}}I\|_{\cB(X(\R))}
<
\frac{\eps}{2(t+1)\|D_{2,j}\|_{\cB(X(\R))}}.
\label{eq:main-key-step-12}
\end{align}
Combining \eqref{eq:main-key-step-8} and
\eqref{eq:main-key-step-10}--\eqref{eq:main-key-step-12}, we see that
for all $R\ge R_0$,
\begin{equation}\label{eq:main-key-step-13}
\|\chi_{R_1}T_\eps\chi_{\{R\}}I\|_{\cB(X(\R))}
<\frac{\eps}{2(t+1)}+\sum_{j=1}^t\frac{\eps}{2(t+1)}=\frac{\eps}{2}.
\end{equation}
It follows from \eqref{eq:main-key-step-7} and \eqref{eq:main-key-step-13}
that for all $R\ge R_0$,
\begin{align}
\|\chi_{R_1} T_{g,h}\chi_{\{R\}}I\|_{\cB(X(\R))}
&
\le
\|\chi_{R_1} (T_{g,h}-T_\eps)\chi_{\{R\}}I\|_{\cB(X(\R))}
\nonumber\\
&\quad +
\|\chi_{R_1} T_\eps\chi_{\{R\}}I\|_{\cB(X(\R))}
\nonumber\\
&\le
\|T_{g,h}-T_\eps\|_{\cB(X(\R))}+\frac{\eps}{2}<\eps.
\label{eq:main-key-step-14}
\end{align}

On the other hand, in view of \cite[Chap.~1, Lemma~2.8]{BS88} 
{
(see also \cite[Chap.~1, \S1, Remark (2) after Theorem 2]{L55}),}
we have
\begin{align*}
&
\|\chi_{R_1} T_{g,h}\chi_{\{R\}}I\|_{\cB(X(\R))}
\\
&\quad=
\sup\left\{
\left\|\chi_{R_1}h\int_\R g(y)\chi_{\{R\}}(y)f(y)\,dy\right\|_{X(\R)}
\ :\
f\in X(\R),\ \|f\|_{X(\R)}\le 1
\right\}
\\
&\quad=
\sup\left\{
\left|\int_\R g(y)\chi_{\{R\}}(y)f(y)\,dy\right|
\|\chi_{R_1}h\|_{X(\R)}
\ :\
f\in X(\R),\ \|f\|_{X(\R)}\le 1
\right\}
\\
&\quad=\|\chi_{R_1}h\|_{X(\R)}
\sup\left\{
\left|\int_\R g(y)\chi_{\{R\}}(y)f(y)\,dy\right|
\ :\
f\in X(\R),\ \|f\|_{X(\R)}\le 1
\right\}
\\
&\quad=
\|\chi_{R_1}h\|_{X(\R)}\|g\chi_{\{R\}}\|_{X'(\R)}.
\end{align*}
This equality, inequality \eqref{eq:main-key-step-9} and inequality
$\|\chi_{\{R\}}g\|_{X'(\R)}\ge\delta$ imply that
\begin{equation}\label{eq:main-key-step-15}
\|\chi_{R_1}T_{g,h}\chi_{\{R\}}I\|_{\cB(X(\R))}
\ge\frac{\delta}{2}\|h\|_{X(\R)}.
\end{equation}
Inequalities \eqref{eq:main-key-step-14} and \eqref{eq:main-key-step-15}
yield a contradiction for $\eps\le\frac{\delta}{2}\|h\|_{X(\R)}$.
\end{proof}
\begin{remark}
Note that a Banach function spaces $X(\R)$ is reflexive if and only if
$X(\R)$ and its associate space $X'(\R)$ are separable
{
(see \cite[Chap.~1, \S2, Theorem 4 and \S3, Corollary 1 to Theorem 7]{L55} or
\cite[Chap.~1, Corollaries~4.4 and 5.6]{BS88}).} 
In turn, if $X'(\R)$ is separable, then for any $g\in X'(\R)$ one has 
$\|\chi_{\{R\}}g\|_{X'(\R)}\to 0$ as $R\to\infty$ in view of 
{
\cite[Chap.~1, \S2, Definition 1 and \S3, Corollary 1 to Theorem 7]{L55}
(or \cite[Chap.~1, Definition~3.1 and Corollary~5.6]{BS88}).}
\end{remark}
To complete the proof of Theorem~\ref{th:main}, we have to show that
there exists a separable non-reflexive Banach function space satisfying the
hypotheses of Theorem~\ref{th:main-key-step}. In the next subsection, we will
show that the classical Lorentz spaces $L^{p,1}(\R)$, $1<p<\infty$,
perfectly fit our needs. 
\subsection{Proof of Theorem~\ref{th:main-Lorentz}}
The space $X(\mathbb{R}) = L^{p, 1}(\mathbb{R})$ is separable and 
\[
\left[L^{p, 1}(\mathbb{R})\right]^* 
= 
\left(L^{p, 1}\right)' (\mathbb{R})
= 
L^{p', \infty}(\mathbb{R}),
\]
where $1/p + 1/p' = 1$ (see \cite[Chap. 1, Corollaries 4.3 and 5.6, 
Chap. 4, Corollary 4.8]{BS88}). 
It is also known that 
\[
L^{p,1}(\R)\subsetneqq [L^{p',\infty}(\R)]^*=[L^{p,1}(\R)]^{**}
\]
(see \cite[p.~83]{C75}). Hence $L^{p,1}(\R)$ is non-reflexive.
The lower and upper Boyd indices of 
$L^{p, 1}(\mathbb{R})$ (resp., of $L^{p', \infty}(\mathbb{R})$) are both 
equal to $1/p$ (resp., to $1/p'$); see \cite[Chap. 4, Theorem 4.6]{BS88}. 
Hence the Hardy-Littlewood maximal operator is bounded on the space 
$X(\mathbb{R})$ and on its associate space $X'(\mathbb{R})$ in view of the 
Lorentz-Shimogaki theorem (see \cite[Chap. 3, Theorem 5.17]{BS88}). Thus, the 
space $L^{p,1}(\R)$ is a separable non-reflexive Banach function space
satisfying condition (a) of Theorem~\ref{th:main}.

Consider the function $g(x)=|x|^{-1/p'}$. Its distribution function is
\[
\mu_g(\lambda)
=
|\{x\in\R:|x|^{-1/p'}>\lambda\}|
=
|\{x\in\R:|x|<\lambda^{-p'}\}|=2\lambda^{-p'}, 
\quad
\lambda\ge 0,
\]
and its non-increasing rearrangement is
\begin{align*}
g^*(t)
&=
\inf\{\lambda\ge 0: 2\lambda^{-p'}\le t\}
=
\inf\{\lambda\ge 0: 2^{1/p'}t^{-1/p'}\le \lambda\}
\\
&=
2^{1/p'}t^{-1/p'},
\quad t\ge 0.
\end{align*}
Then
\[
g^{**}(t)
=
\frac{1}{t}\int_0^t 2^{1/p'}y^{-1/p'}dy
=\frac{2^{1/p'}t^{-1/p'}}{1-1/p'}
=
2^{1/p'}pt^{-1/p'},
\quad t\ge 0
\]
and
\[
\|g\|_{(p',\infty)}=2^{1/p'}p<\infty.
\]
The distribution function of $\chi_{\{R\}}g$ for every $R>0$ is given by
\begin{align*}
\mu_{\chi_{\{R\}}g}(\lambda)
&=
|\{x\in\R:\chi_{\{R\}}(x)g(x)>\lambda\}|
\\
&=
\left\{\begin{array}{lll}
2\lambda^{-p'}-2R & \mbox{if}& 0\le\lambda<R^{-1/p'},
\\
0 &\mbox{if}& \lambda\ge R^{-1/p'}.
\end{array}\right.
\end{align*}
Then
\begin{align*}
(\chi_{\{R\}}g)^*(t)
&=
\inf\{\lambda\ge 0 : 2\lambda^{-p'}-2R\le t\}
=
\inf\left\{\lambda\ge 0:\lambda^{-p'}\le\frac{t}{2}+R\right\}
\\
&=
\inf\left\{\lambda\ge 0:\frac{2}{t+2R}\le\lambda^{p'}\right\}
=
2^{1/p'}(t+2R)^{-1/p'},
\quad
t\ge 0.
\end{align*}
Since $(\chi_{\{R\}}g)^*$ is non-increasing, we have
$(\chi_{\{R\}}g)^{**}\ge (\chi_{\{R\}}g)^*$ and
\begin{align*}
\|\chi_{\{R\}}g\|_{(p',\infty)}
&\ge 
\sup_{0<t<\infty}\left(t^{1/p'}(\chi_{\{R\}}g)^*(t)\right)
\\
&=
2^{1/p'}\sup_{0<t<\infty}
\left(\frac{t}{t+2R}\right)^{1/p'}
=
2^{1/p'}.
\end{align*}
Thus, the conditions of Theorem~\ref{th:main-key-step} are satisfied for
$X(\R)=L^{p,1}(\R)$, $g(x)=|x|^{-1/p'}$ and $\delta=2^{1/p'}$.
The desired result now follows from that theorem.
\qed
\section{Final remarks on algebras of convolution type operators with 
continuous and slowly oscillating data}
\label{sec:SO}
\subsection{Algebra \boldmath{$C_X^0(\dR)$} of continuous Fourier multipliers}
{Let} 
$\C$ stand for the constant complex-valued functions on $\R$. Notice that 
$C(\dR)$ decomposes into the direct sum $C(\dR)=\C\mathbf{\dot{+}}C_0(\R)$. 
It follows from the mean value theorem
that
\begin{equation}\label{eq:embedding-of-classes-of-conitnuous-functions}
\C\mathbf{\dot{+}}C_c^\infty(\R)\subset C(\dR)\cap V(\R).
\end{equation}

Suppose $X(\R)$ is a separable Banach function space such that the 
Hardy-Little\-wood maximal operator $M$ is bounded on $X(\R)$
and on its associate space $X'(\R)$. Along with the algebra
$C_X(\dR)$ of continuous Fourier multipliers defined by
\eqref{eq:algebra-CX}, consider the following algebra of continuous 
Fourier multipliers:
\begin{equation}\label{eq:algebra-CX0}
C_{X}^0(\dR):=\operatorname{clos}_{\cM_{X(\R)}}
\big(\C\mathbf{\dot{+}}C_c^\infty(\R)\big).
\end{equation}

It follows from embeddings 
\eqref{eq:embedding-of-classes-of-conitnuous-functions} and definitions 
\eqref{eq:algebra-CX} and \eqref{eq:algebra-CX0} that
\begin{equation}\label{eq:embeddings-of-classes-of-continuous-multipliers}
C_{X}^0(\dR)\subset C_{X}(\dR).
\end{equation}

For large classes of Banach function spaces, including separable
re\-ar\-range\-ment-invariant Banach function with 
{nontrivial} Boyd indices,
weighted Lebesgue spaces with Muckenhoupt weights, reflexive variable 
Lebesgue spaces $L^{p(\cdot)}(\R)$ such that the Hardy-Littlewood
maximal operator $M$ is bounded on $L^{p(\cdot)}(\R)$, the above
embedding becomes equality (see \cite[Theorem~3.3]{FK20} and
\cite[Theorem~1.1]{K20}). Proofs of \cite[Theorem~3.3]{FK20}
and \cite[Theorem~1.1]{K20} are based on an 
{interpolation} argument. 
Unfortunately, interpolation tools are not available in the general setting
of Banach function spaces. So, we arrive at the following.
\begin{question}\label{question-1}
Is it true that $C_{X}^0(\dR)= C_{X}(\dR)$ for an arbitrary
separable Banach function space $X(\R)$ such that the Hardy-Littlewood
maximal operator is bounded on $X(\R)$ and on its associate space
$X'(\R)$?
\end{question}
\subsection{The ideal of compact operators is contained in the 
algebra of convolution type operators with continuous data}
Since we do not know the answer on Question~\ref{question-1}, 
along with the Banach algebra $\cA_{X(\R)}$, we will also consider
the smallest Banach subalgebra
\[
\cA_{X(\R)}^0
:=
\operatorname{alg}\{aI,W^0(b)\ :\ a\in C(\dR),\ b\in C_X^0(\dR)\}
\]
of the algebra $\cB(X(\R))$ that {contains} 
all operators of multiplication $aI$ 
by functions $a\in C(\dR)$ and all Fourier convolution operators $W^0(b)$ with 
symbols $b\in C_X^0(\dR)$.

If the answer to Question~\ref{question-1} is negative, then 
the following result provides a refinement of Theorem~\ref{th:FKK}.
\begin{theorem}\label{th:FKK-refined}
Let $X(\R)$ be a reflexive Banach function space such that the Hardy-Littlewood 
maximal operator $M$ is bounded on $X(\R)$ and on its associate 
space $X'(\R)$.  Then the ideal of compact operators $\cK(X(\R))$ is contained 
in the Banach algebra $\cA_{X(\R)}^0$.
\end{theorem}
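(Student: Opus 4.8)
The plan is to mimic the proof of Theorem~\ref{th:FKK} from \cite{FKK19}, checking that at every point where that argument invokes the algebra $C_X(\dR)$, the smaller algebra $C_X^0(\dR)$ already suffices. The crucial structural fact making this work is that the ideal $\cK(X(\R))$ is generated (as a closed two-sided ideal, or even just as a closed subspace that is then shown to lie in $\cA^0_{X(\R)}$) by a single well-chosen rank one operator, or by a concrete family of approximants built from very smooth data. So the real content is: (i) exhibit, inside $\cA^0_{X(\R)}$, a rank one operator or a compact operator of the form $W^0(\widetilde b)\,\widetilde a I$ (with $\widetilde a\in C_c^\infty(\R)$, $\widetilde b\in C_c^\infty(\R)$) that is ``nondegenerate'' in the appropriate sense; and (ii) run the standard approximation-by-convolution argument to show that the closed ideal generated by such operators is all of $\cK(X(\R))$.

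First I would recall that under reflexivity $X(\R)$ and $X'(\R)$ are both separable, so $C_c^\infty(\R)$ is dense in both $X(\R)$ and $X'(\R)$, and finite-rank operators with kernels in $C_c^\infty(\R)\otimes C_c^\infty(\R)$ are dense in $\cK(X(\R))$ (this is the point where reflexivity is genuinely used — approximating an arbitrary compact operator in operator norm by finite-rank operators requires the separability of $X'(\R)$, via the standard argument that on a reflexive space with separable dual every compact operator is a norm limit of finite-rank ones). Thus it suffices to show that every rank one operator $f\mapsto h\,\langle g,\cdot\rangle$ with $g,h\in C_c^\infty(\R)$ belongs to $\cA^0_{X(\R)}$. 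Next I would reproduce the core computation from \cite{FKK19}: for suitable $a_0,b_0\in C_c^\infty(\R)$ the operator $a_0 W^0(b_0) a_1 I$ (or a finite combination of such) has a kernel that is itself a nice compactly supported smooth function, and by choosing the data appropriately (e.g.\ convolving and multiplying by bump functions) one realizes any prescribed smooth compactly supported kernel, up to controllable error, as such a combination. Since all the data $a_0,a_1\in C(\dR)$ (indeed in $C_c^\infty(\R)\subset C(\dR)$) and $b_0\in C_c^\infty(\R)\subset C_X^0(\dR)$ by definition \eqref{eq:algebra-CX0}, every step stays inside $\cA^0_{X(\R)}$ rather than merely $\cA_{X(\R)}$.

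The key observation enabling the refinement is embedding \eqref{eq:embeddings-of-classes-of-continuous-multipliers} together with the fact that the \cite{FKK19} proof, when one tracks which Fourier multipliers actually appear, only ever needs multipliers $b$ that are Fourier transforms of smooth compactly supported functions — because those are exactly the symbols whose convolution operators have smooth compactly supported kernels, and kernels of that form are what one uses to build finite-rank operators and to run the Arzel\`a--Ascoli compactness argument (cf.\ Lemma~\ref{le:compact-convolution} and the proof of Theorem~\ref{th:compactness-products}). So the plan is: isolate from the argument of \cite[Theorem~1.1]{FKK19} the finitely many multipliers it uses, verify each lies in $\C\mathbf{\dot{+}}C_c^\infty(\R)$ hence in $C_X^0(\dR)$, and conclude that the compact operators it constructs all lie in $\cA^0_{X(\R)}$; then invoke density of such compacts in $\cK(X(\R))$.

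The main obstacle I anticipate is bookkeeping rather than conceptual: one must re-examine the original proof carefully enough to be certain that no step secretly requires passing to the closure $C_X(\dR)$ in an essential way — for instance if \cite{FKK19} at some point approximates a general $b\in C_X(\dR)$ and uses continuity of the construction in the multiplier norm, one has to replace that by the observation that it suffices to treat $b\in\C\mathbf{\dot{+}}C_c^\infty(\R)$ directly, since those are dense in $C_X^0(\dR)$ by definition and the whole assertion ``$\cK(X(\R))\subset\cA^0_{X(\R)}$'' is about a closed subalgebra. A secondary point to handle with care is that one should not inadvertently use the equality $C_X^0(\dR)=C_X(\dR)$ (which is open, per Question~\ref{question-1}); the argument must be self-contained using only \eqref{eq:embeddings-of-classes-of-continuous-multipliers}. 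Once these checks are done, the theorem follows essentially verbatim from the proof of Theorem~\ref{th:FKK}.
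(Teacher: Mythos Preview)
Your overall strategy---repeat the proof of Theorem~\ref{th:FKK} from \cite{FKK19} and verify that every Fourier multiplier used there already lies in $C_X^0(\dR)$---is exactly the paper's approach. The paper states this in one sentence and isolates the single place where something must be checked: the replacement of \cite[Lemma~4.2]{FKK19} by Lemma~\ref{le:one-dimensional-operator}.

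However, there is a concrete slip in your plan. You correctly observe that the multipliers arising in the \cite{FKK19} argument are Fourier transforms of functions $k\in C_c^\infty(\R)$ (so that $W^0(\widehat{k})$ is convolution with the compactly supported kernel $k$), but you then assert one should ``verify each lies in $\C\mathbf{\dot{+}}C_c^\infty(\R)$''. This is false: by the Paley--Wiener theorem, $\widehat{k}$ is real-analytic and hence cannot be compactly supported unless $k=0$. The symbol $c=\widehat{k}$ lies only in $\mathcal{S}(\R)$, not in $\C\mathbf{\dot{+}}C_c^\infty(\R)$. What one must show instead is that $\mathcal{S}(\R)\subset C_X^0(\dR)$, i.e.\ that Schwartz functions belong to the $\cM_{X(\R)}$-closure of $C_c^\infty(\R)$. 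This is precisely the content of the last paragraph of the proof of Lemma~\ref{le:one-dimensional-operator}: since $\widehat{k}\in\mathcal{S}(\R)\subset C_0(\R)\cap V(\R)$, Lemma~\ref{le:approximating-vanishing-multipliers}(a) gives $\upsilon_n\widehat{k}\to\widehat{k}$ in $\cM_{X(\R)}$ with $\upsilon_n\widehat{k}\in C_c^\infty(\R)$, so $\widehat{k}\in C_X^0(\dR)$. Once this approximation step is supplied, your plan goes through and coincides with the paper's proof.
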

The proof of Theorem~\ref{th:FKK-refined} repeats word-by-word the proof
of Theorem~\ref{th:FKK} with \cite[Lemma~4.2]{FKK19} replaced by the 
following.
\begin{lemma}\label{le:one-dimensional-operator}
Let $X(\R)$ be a separable Banach function space such that the Hardy-Littlewood 
maximal operator $M$ is bounded on $X(\R)$ and on its associate 
space $X'(\R)$. Suppose $a,b\in C_c(\R)$ and a
one-dimensional operator $T_1$ is defined on the space $X(\R)$ by
\begin{equation}\label{eq:one-dimensional-operator-1}
(T_1f)(x)=a(x)\int_\R b(y)f(y)\,dy.
\end{equation}
Then there exists a function $c\in C_{X}^0(\dR)$ 
such that $T_1=aW^0(c)bI$.
\end{lemma}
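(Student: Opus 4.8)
The plan is to realize the rank-one operator $T_1$ in \eqref{eq:one-dimensional-operator-1} as a product of the multiplication operator $aI$, a Fourier convolution operator $W^0(c)$, and the multiplication operator $bI$, and then to verify that the multiplier $c$ can be taken in the small algebra $C_X^0(\dR)$. The natural candidate for the kernel of $W^0(c)$ is dictated by the formula $(aW^0(c)bf)(x) = a(x)\bigl((F^{-1}c)*(bf)\bigr)(x)$. For this to equal $a(x)\int_\R b(y)f(y)\,dy$ we would want $(F^{-1}c)(x-y)$ to be identically $1$ on the set where the integrand lives; since $\operatorname{supp} a$ and $\operatorname{supp} b$ are compact, say both contained in $[-R,R]$, it suffices that $k:=F^{-1}c$ satisfy $k\equiv 1$ on $[-2R,2R]$. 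So first I would fix such an $R$, then choose $k\in C_c^\infty(\R)$ with $0\le k\le 1$ and $k\equiv 1$ on $[-2R,2R]$, and set $c:=Fk\in\mathcal S(\R)$.

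Next I would check the algebraic identity $T_1 = aW^0(c)bI$. Since $c\in\mathcal S(\R)\subset C(\dR)\cap V(\R)$, the operator $W^0(c)$ is bounded on $X(\R)$ by \eqref{eq:Stechkin}, and by the convolution theorem for the inverse Fourier transform (exactly as in \eqref{eq:key-estimate-1} and \eqref{eq:compactness-products-1}) we have $W^0(c)g = k*g$ first for $g\in C_c^\infty(\R)$ and then, by density (Lemma~\ref{le:density}) and boundedness, for all $g\in X(\R)$. Applying this with $g=bf$, and using that $bf$ is supported in $[-R,R]$ and that for $x\in\operatorname{supp}a\subset[-R,R]$ and $y\in[-R,R]$ we have $x-y\in[-2R,2R]$, hence $k(x-y)=1$, we get
\[
(aW^0(c)bf)(x)=a(x)\int_\R k(x-y)b(y)f(y)\,dy=a(x)\int_\R b(y)f(y)\,dy=(T_1f)(x),
\]
for every $f\in X(\R)$, which is the claimed identity.

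Finally I would confirm that $c\in C_X^0(\dR)$. By construction $c=Fk$ with $k\in C_c^\infty(\R)$, and $c(\infty)=\lim_{x\to\pm\infty}(Fk)(x)=0$ by the Riemann--Lebesgue lemma, so $c\in C_0(\R)$; moreover $c\in\mathcal S(\R)$. It is not immediate that an arbitrary Schwartz function lies in the closure of $\C\dot{+}C_c^\infty(\R)$ in the multiplier norm, so here I would approximate: since $k=F^{-1}c\in C_c^\infty(\R)$, the functions $c_n:=F(\upsilon_n\cdot\text{(a fixed smooth compactly supported function equal to }1\text{ on }\operatorname{supp}k))$ are of no help because $k$ is already compactly supported; instead, one mollifies on the frequency side. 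Concretely, with $\upsilon_n$ as in Lemma~\ref{le:approximating-vanishing}, set $c_n:=\upsilon_n c\in C_c^\infty(\R)$; then $c-c_n\in C_0(\R)\cap V(\R)$ and, since $c\in\mathcal S(\R)$, Lemma~\ref{le:approximating-vanishing-multipliers}(a) gives $\|c-c_n\|_{\cM_{X(\R)}}\to 0$. As each $c_n\in C_c^\infty(\R)\subset\C\dot{+}C_c^\infty(\R)$, this shows $c\in C_X^0(\dR)$.

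The only genuinely delicate point is the last one: establishing that the specific multiplier $c$ we built actually lies in $C_X^0(\dR)$ rather than merely in $C_X(\dR)$. The argument above reduces it to Lemma~\ref{le:approximating-vanishing-multipliers}(a), which already handles exactly the approximation of a $C_0(\R)\cap V(\R)$ multiplier by its truncations $\upsilon_n b$; everything else is the routine convolution-theorem manipulation that appears repeatedly earlier in the paper. I would also remark that the choice of $k$ (hence $c$) depends on $R$, i.e.\ on $\operatorname{supp}a\cup\operatorname{supp}b$, but this is harmless since $a,b$ are fixed in the statement.
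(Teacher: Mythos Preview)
The proposal is correct and follows essentially the same route as the paper's proof: choose a smooth compactly supported bump $k$ equal to $1$ on the difference set of the supports, set $c=\widehat{k}\in\mathcal{S}(\R)$, verify $T_1=aW^0(c)bI$ via the convolution theorem, and then show $c\in C_X^0(\dR)$ by approximating $c\in C_0(\R)\cap V(\R)$ by the truncations $\upsilon_n c\in C_c^\infty(\R)$ using Lemma~\ref{le:approximating-vanishing-multipliers}(a). The only cosmetic difference is that the paper packages the last step as the inclusion $\mathcal{S}(\R)\subset\operatorname{clos}_{\cM_{X(\R)}}\big(C_c^\infty(\R)\big)$, while you spell out the same approximation explicitly.
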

\begin{proof}
The idea of the proof is borrowed from \cite[Lemma~6.1]{KILH13a}
(see also \cite[Proposition~5.8.1]{RSS11}). Since $a,b\in C_c(\R)$, there
exists a number $M>0$ such that the set $\{x-y:x\in\operatorname{supp}a,
y\in\operatorname{supp}b\}$ is contained in the segment {$[-M,M]$.} 
By the smooth version of Urysohn's lemma (see, e.g.,
\cite[Proposition~6.5]{F09}), there exists $k\in C_c^\infty(\R)$
such that $0\le k(x)\le 1$ for $x\in\R$, $k(x)=1$ for $x\in[-M,M]$ and $k(x)=0$
for $x\in\R\setminus(-2M,2M)$. Then \eqref{eq:one-dimensional-operator-1}
can be rewritten in the form
\[
(T_1f)(x)=a(x)\int_\R k(x-y)b(y)f(y)\,dy=\big(aW^0(\widehat{k})bf\big)(x),
\quad
x\in\R.
\]
{
By \cite[Example~2.2.2 and Proposition~2.2.11]{G14},
$C_c^\infty(\R)\subset\mathcal{S}(\R)$ and $\widehat{k}\in\mathcal{S}(\R)$.
Since $\mathcal{S}(\R)\subset C_0(\R)\cap V(\R)$,
it follows from Lemma~\ref{le:approximating-vanishing-multipliers}(a)
that
\[
\mathcal{S}(\R)\subset
\operatorname{clos}_{\cM_{X(\R)}}\big(C_c^\infty(\R)\big)\subset C_X^0(\dR).
\]
Hence $c:=\widehat{k}\in C_X^0(\dR)$.
}
\end{proof}
\subsection{Slowly oscillating Fourier multipliers}
\label{sec:SO-multipliers}
For a set $E\subset\dR$ and a function
$f:\dR\to\C$ in $L^\infty(\R)$, let the oscillation of
$f$ over $E$ be defined by
\[
\operatorname{osc}(f,E)
:=
\operatornamewithlimits{ess\,sup}_{s,t\in E}|f(s)-f(t)|.
\]
Following \cite[Section~4]{BFK06} and
\cite[Section~2.1]{KILH12}, \cite[Section~2.1]{KILH13a},
we say that a function $f\in L^\infty(\R)$ is slowly
oscillating at a point $\lambda\in\dR$ if for every $r\in(0,1)$ or,
equivalently, for some $r\in(0,1)$, one has
\[
\begin{array}{lll}
\lim\limits_{x\to 0+}
\operatorname{osc}\big(f,\lambda+([-x,-rx]\cup[rx,x])\big)=0
&\mbox{if}& \lambda\in\R,
\\
\lim\limits_{x\to +\infty}
\operatorname{osc}\big(f,[-x,-rx]\cup[rx,x]\big)=0
&\mbox{if}& \lambda=\infty.
\end{array}
\]
For every $\lambda\in\dR$, let $SO_\lambda$ denote the $C^*$-subalgebra of
$L^\infty(\R)$ defined by
\[
SO_\lambda:=\left\{f\in C_b(\dR\setminus\{\lambda\})\ :\ f
\mbox{ slowly oscillates at }\lambda\right\},
\]
where
$C_b(\dR\setminus\{\lambda\}):=C(\dR\setminus\{\lambda\})\cap L^\infty(\R)$.

Let $SO^\diamond$ be the smallest $C^*$-subalgebra of $L^\infty(\R)$ that
contains all the $C^*$-algebras $SO_\lambda$ with $\lambda\in\dR$.
The functions in $SO^\diamond$ are called slowly oscillating functions.

For a point $\lambda\in\dR$, let $C^3(\R\setminus\{\lambda\})$ be the set of
all three times continuously differentiable functions
$a:\R\setminus\{\lambda\}\to\C$.
Following \cite[Section~2.4]{KILH12} and \cite[Section~2.3]{KILH13a}, consider
the commutative Banach algebras
\[
SO_\lambda^3:=\left\{
a\in SO_\lambda\cap C^3(\R\setminus\{\lambda\})\ :\
\lim_{x\to\lambda}(D_\lambda^k a)(x)=0,
\ k=1,2,3
\right\}
\]
equipped with the norm
\[
\|a\|_{SO_\lambda^3}:=
\sum_{{k=0}}^3\frac{1}{{k!}}
\left\|D_\lambda^ka\right\|_{L^\infty(\R)},
\]
where $(D_\lambda a)(x)=(x-\lambda) a'(x)$ for $\lambda\in\R$ and
$(D_\lambda a)(x)=xa'(x)$ for $\lambda=\infty$.

The following result leads us to the definition of slowly oscillating
Fourier multipliers.
\begin{theorem}[{\cite[Theorem~2.5]{K15c}}]
\label{th:boundedness-convolution-SO}
Let $X(\R)$ be a separable Banach function space such that the
Hardy-Littlewood maximal operator $M$ is bounded on $X(\R)$ and on its
associate space $X'(\R)$. If $\lambda\in\dR$ and $a\in SO_\lambda^3$, then
the convolution operator $W^0(a)$ is bounded on the space $X(\R)$ and
\[
\|W^0(a)\|_{\cB(X(\R))}
\le
d_{X}\|a\|_{SO_\lambda^3},
\]
where $d_{X}$ is a positive constant depending only on $X(\R)$.
\end{theorem}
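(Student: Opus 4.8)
The plan is to identify $W^0(a)$ with a Fourier multiplier operator whose symbol obeys the classical Mikhlin--H\"ormander condition, and then to transfer weighted $L^2$ bounds to $X(\R)$ by Rubio de Francia's extrapolation theorem for Banach function spaces --- the same device that underlies the Stechkin-type inequality~\eqref{eq:Stechkin}. First I would dispose of a finite $\lambda\in\R$ by a modulation: multiplication by the unimodular function $e^{i\lambda x}$ is an isometry of $X(\R)$ (the norm depends only on the modulus), and a direct computation on the dense subspace $L^2(\R)\cap X(\R)$ (Lemma~\ref{le:density}) shows that
\[
e^{i\lambda x}\,W^0(a)\,e^{-i\lambda x}I=W^0\!\big(a(\cdot+\lambda)\big),
\]
while $a\in SO_\lambda^3$ if and only if $a(\cdot+\lambda)\in SO_0^3$, with equal norms. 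Hence it suffices to treat $\lambda\in\{0,\infty\}$; in both cases the operator $D_\lambda$ in the definition of $SO_\lambda^3$ is $(Da)(\xi)=\xi a'(\xi)$.

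Next I would extract the Mikhlin condition from the $SO_\lambda^3$-norm. Writing $D=\xi\,d/d\xi$, one has the elementary identities $\xi a'=Da$, $\xi^2a''=D^2a-Da$ and $\xi^3a'''=D^3a-3D^2a+2Da$, whence
\[
\sup_{\xi\neq 0}\sum_{k=0}^{3}|\xi|^{k}\,|a^{(k)}(\xi)|\ \le\ C\,\|a\|_{SO_\lambda^3}
\]
with $C$ absolute; the vanishing conditions in the definition of $SO_\lambda^3$ are not needed here, as they would enter only in a symbol calculus. For a finite $\lambda$ the same estimate applied to $a(\cdot+\lambda)$ is exactly what the modulation step delivers. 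Consequently, by the weighted Mikhlin multiplier theorem of Kurtz--Wheeden, $W^0(a)$ is bounded on $L^2(\R,w)$ for every $w\in A_2$, with operator norm dominated by an increasing function of the $A_2$-characteristic of $w$ times $\|a\|_{SO_\lambda^3}$.

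Finally, applying Rubio de Francia's extrapolation theorem in the setting of Banach function spaces on which $M$ is bounded both on $X(\R)$ and on $X'(\R)$, one obtains the a priori inequality $\|W^0(a)f\|_{X(\R)}\le d_X\|a\|_{SO_\lambda^3}\|f\|_{X(\R)}$ for $f\in L^2(\R)\cap X(\R)$, with $d_X$ depending only on $X(\R)$ through the boundedness constants of $M$ on $X(\R)$ and on $X'(\R)$. Since $W^0(a)$ is already bounded on $L^2(\R)$ (as $a\in L^\infty(\R)$) and $L^2(\R)\cap X(\R)$ is dense in $X(\R)$, this shows that $a\in\cM_{X(\R)}$ with the stated bound. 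The crux --- and the only non-elementary ingredient --- is the extrapolation step, where the hypothesis ``$M$ bounded on $X(\R)$ \emph{and} on $X'(\R)$'' is used in full; a more hands-on alternative would replace it by a dyadic Littlewood--Paley splitting of $a$ into frequency blocks of uniformly bounded variation (controlled, as above, via $D_\lambda a$), applying~\eqref{eq:Stechkin} to each block and summing them by means of the square-function equivalence between $\|f\|_{X(\R)}$ and $\big\|\big(\sum_n|\Delta_nf|^2\big)^{1/2}\big\|_{X(\R)}$, which, however, rests on the same weighted theory.
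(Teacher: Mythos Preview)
The paper does not give its own proof of this theorem: it is quoted verbatim from \cite[Theorem~2.5]{K15c} and used as a black box in Section~\ref{sec:SO-multipliers}. So there is no argument in the present paper to compare your sketch against.

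That said, your sketch is sound and is precisely the strategy behind \cite[Theorem~2.5]{K15c}. The reduction of a finite $\lambda$ to $\lambda=0$ by conjugation with $e^{i\lambda x}$ is correct (the Banach function norm depends only on $|f|$, so unimodular multiplications are isometries), and your identities $\xi^2 a''=D^2a-Da$, $\xi^3 a'''=D^3a-3D^2a+2Da$ indeed yield the one--dimensional Mikhlin--H\"ormander bound $\sup_{\xi\neq 0}|\xi|^k|a^{(k)}(\xi)|\le C\|a\|_{SO_\lambda^3}$ for $k\le 3$. From there the Kurtz--Wheeden weighted multiplier theorem gives uniform $L^2(\R,w)$ bounds for $w\in A_2$, and the extrapolation theorem for Banach function spaces (valid under the hypothesis that $M$ is bounded on both $X(\R)$ and $X'(\R)$) transfers these to $X(\R)$; this is exactly the mechanism underlying inequality~\eqref{eq:Stechkin} as well (cf.\ \cite[Theorem~4.3]{K15a}). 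Your remark that the limit conditions $\lim_{x\to\lambda}D_\lambda^k a(x)=0$ in the definition of $SO_\lambda^3$ play no role here is also correct: they matter for the symbol calculus of slowly oscillating multipliers, not for the mere boundedness estimate.
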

Let $SO_{\lambda,X(\R)}$ denote the closure of $SO_\lambda^3$ in the norm of
$\cM_{X(\R)}$. Further, let $SO_{X(\R)}^\diamond$ be the smallest Banach
subalgebra of $\cM_{X(\R)}$ that contains all the Banach algebras
$SO_{\lambda,X(\R)}$ for $\lambda\in\dR$. The functions in
$SO_{X(\R)}^\diamond$ will be called slowly oscillating Fourier multipliers.
\subsection{The ideal of compact operators is contained in the 
algebra of convolution type operators with slowly oscillating data}
Consider the smallest Banach subalgebra
\[
\mathcal{D}_{X(\R)}
:=
\operatorname{alg}\{aI,W^0(b)\ :\ a\in SO^\diamond,\ b\in SO^\diamond_{X(\R)}\}
\]
of the algebra $\cB(X(\R))$ that {contains} 
all operators of multiplication $aI$ 
by slowly oscillating functions $a\in SO^\diamond$ and all Fourier convolution
operators $W^0(b)$ with slowly oscillating symbols $b\in SO_{X(\R)}^\diamond$.

Now we are in a position to formulate the main result of this section.
\begin{theorem}\label{th:compacts-in-D}
Let $X(\R)$ be a reflexive Banach function space such that the Hardy-Littlewood 
maximal operator $M$ is bounded on $X(\R)$ and on its associate space 
$X'(\R)$. Then the ideal of compact operators $\cK(X(\R))$ is contained in the 
Banach algebra $\mathcal{D}_{X(\R)}$.
\end{theorem}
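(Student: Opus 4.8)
The plan is to reduce Theorem~\ref{th:compacts-in-D} to the already-established Theorem~\ref{th:FKK} (or its refinement Theorem~\ref{th:FKK-refined}) by exhibiting the algebra $\cA_{X(\R)}$ (respectively $\cA_{X(\R)}^0$) as a subalgebra of $\mathcal{D}_{X(\R)}$. The key observation is the pair of inclusions $C(\dR)\subset SO^\diamond$ and $C_X^0(\dR)\subset SO_{X(\R)}^\diamond$, which are indicated in the introduction. Once these are in hand, every generator $aI$ with $a\in C(\dR)$ and every generator $W^0(b)$ with $b\in C_X^0(\dR)$ of $\cA^0_{X(\R)}$ lies in $\mathcal{D}_{X(\R)}$; since $\mathcal{D}_{X(\R)}$ is a Banach subalgebra of $\cB(X(\R))$, it contains the smallest such subalgebra, i.e. $\cA^0_{X(\R)}\subset \mathcal{D}_{X(\R)}$. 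By Theorem~\ref{th:FKK-refined}, $\cK(X(\R))\subset\cA^0_{X(\R)}$, and the result follows.

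First I would verify $C(\dR)\subset SO^\diamond$. A function $a\in C(\dR)$ is continuous on the compact space $\dR$, hence at every point $\lambda\in\dR$ the oscillation $\operatorname{osc}(a,\lambda+([-x,-rx]\cup[rx,x]))$ (or the analogous expression at $\infty$) tends to $0$ as $x\to 0^+$ (resp. $x\to+\infty$) simply by continuity of $a$ at $\lambda$; thus $a\in SO_\lambda$ trivially, and in particular $a\in C_b(\dR\setminus\{\lambda\})$. So $C(\dR)\subset SO_\lambda$ for every $\lambda$, whence $C(\dR)\subset SO^\diamond$. Next I would verify $C_X^0(\dR)\subset SO_{X(\R)}^\diamond$. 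Recall $C_X^0(\dR)=\operatorname{clos}_{\cM_{X(\R)}}(\C\dot{+}C_c^\infty(\R))$. Constant functions are Fourier multipliers lying in $SO_{\infty,X(\R)}$ (the constant symbol satisfies $D_\infty^k c=0$). For $b_0\in C_c^\infty(\R)\subset\mathcal{S}(\R)$, one checks directly that $b_0\in SO_\infty^3$: indeed $b_0\in C^3(\R)$, $b_0$ vanishes in a neighborhood of $\infty$ so it slowly oscillates there, and $(D_\infty^k b_0)(x)=$ (a combination of $x^j b_0^{(j)}(x)$) which is compactly supported and continuous, hence tends to $0$ as $x\to\infty$; the limit condition at $\lambda=\infty$ in the definition of $SO_\lambda^3$ is about $x\to\lambda=\infty$, which is satisfied because these functions are compactly supported. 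Therefore $C_c^\infty(\R)\subset SO_\infty^3\subset SO_{\infty,X(\R)}\subset SO_{X(\R)}^\diamond$. Since $SO_{X(\R)}^\diamond$ is a closed subalgebra of $\cM_{X(\R)}$ and contains $\C\dot{+}C_c^\infty(\R)$, it contains its closure $C_X^0(\dR)$.

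With both inclusions established, $\cA_{X(\R)}^0\subset\mathcal{D}_{X(\R)}$ as explained, and Theorem~\ref{th:FKK-refined} gives $\cK(X(\R))\subset\cA_{X(\R)}^0\subset\mathcal{D}_{X(\R)}$. The only place where any care is needed is the verification that compactly supported (or Schwartz) functions meet the structural definition of $SO_\lambda^3$ at $\lambda=\infty$ — in particular that the decay conditions $\lim_{x\to\lambda}(D_\lambda^k a)(x)=0$ are read with $\lambda=\infty$, so that they are decay-at-infinity conditions rather than conditions near a finite point; this is exactly what compact support (or Schwartz decay) provides. I expect no serious obstacle: the argument is a routine containment chase using the definitions in Section~\ref{sec:SO-multipliers} together with Theorem~\ref{th:FKK-refined}, which was itself already proved to follow word-for-word from the proof of Theorem~\ref{th:FKK} via Lemma~\ref{le:one-dimensional-operator}.
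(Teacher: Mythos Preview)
Your proposal is correct and follows exactly the paper's approach: the paper's proof consists of the single line ``This result follows from Theorem~\ref{th:FKK-refined},'' relying on the inclusions $C(\dR)\subset SO^\diamond$ and $C_X^0(\dR)\subset SO_{X(\R)}^\diamond$ stated in the introduction, which is precisely the containment chase you spell out in detail.
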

{
This result follows from Theorem~\ref{th:FKK-refined}.
}

Under the assumptions of Theorem~\ref{th:compacts-in-D}, we can define
the quotient algebra
\[
\mathcal{D}_{X(\R)}^\pi:=\mathcal{D}_{X(\R)}/\mathcal{K}(X(\R)).
\]
We conclude this section with the following.
\begin{question}
Let $X(\R)$ be a reflexive Banach function space such that the Hardy-Littlewood 
maximal operator $M$ is bounded on $X(\R)$ and on its associate space 
$X'(\R)$. Is it true that the quotient algebra $\mathcal{D}_{X(\R)}^\pi$
is commutative?
\end{question}
We know that the answer is positive for some particular cases of Banach 
function spaces.
For Lebesgue spaces $L^p(\R,w)$, $1<p<\infty$, with Muckenhoupt weights $w$,
the positive answer to the above question follows from 
\cite[Theorem~4.6]{KILH13a}, whose proof relies on a version of the 
Krasnosel'skii interpolation theorem for compact operators
(see, e.g., \cite[Corollary~5.3]{K12}).
The answer is also positive for reflexive variable Lebesgue spaces
$L^{p(\cdot)}(\R)$ such that the Hardy-Littlewood maximal operator $M$
is bounded on $L^{p(\cdot)}(\R)$. It is based on a similar interpolation
argument (see \cite[Lemma 6.4]{K15a}). However, as far as we know, for 
arbitrary Banach function spaces, interpolation tools are not available.
\subsection*{Acknowledgment}
This work was supported by national funds through the FCT – Funda\c{c}\~ao 
para a Ci\^encia e a Tecnologia, I.P. (Portuguese Foundation for Science and 
Technology) within the scope of the project UIDB/00297/2020 
(Centro de Matem\'atica e Aplicações).

We are grateful to Helena Mascarenhas, who asked the first author about
a possibility of refinement of Theorem~\ref{th:FKK} contained in 
Theorem~\ref{th:FKK-refined}.
{
We thank the anonymous referee for useful remarks.
}

\end{document}